\documentclass[11pt]{amsart}
\usepackage{amsmath,amssymb,mathrsfs,enumerate,hyperref}
\usepackage[alphabetic,initials]{amsrefs}
\newtheorem{theorem}{Theorem}[section]
\newtheorem{lemma}[theorem]{Lemma}
\newtheorem{proposition}[theorem]{Proposition}
\newtheorem{corollary}[theorem]{Corollary}
\theoremstyle{definition}

\newtheorem{remark}[theorem]{Remark}
\newtheorem*{acknowledgement}{Acknowledgement}

\begin{document}
\title[Area-preserving maps under coupled flow]{Deforming area-preserving maps between surfaces by mean curvature flow coupled with Ricci flow}
\author{Ping-Hung Lee}
\address{Columbia University \\ 2990 Broadway \\ New York NY 10027 \\ USA}
\email{pl2975@columbia.edu}
\begin{abstract}
We study a natural way to deform area-preserving maps between compact Riemann surfaces. Specifically, we evolve the metrics on the two Riemann surfaces by the normalized Ricci flow and the graph of the area-preserving map by the mean curvature flow. We prove that the flow exists for all time, remains the graph of an area-preserving map, and converges smoothly and exponentially to a minimal Lagrangian graph with respect to the product of the limiting metrics. This generalizes earlier results of Wang and Smoczyk, in which the Riemann surfaces have constant scalar curvature.
\end{abstract}
\maketitle

\section{Introduction}
\label{section:introduction}

Let $(\Sigma,g_0,\omega_0)$ and $(\tilde\Sigma,\tilde g_0,\tilde\omega_0)$ be two compact Riemann surfaces with the same average scalar curvature $2\kappa$. Evolve $(\Sigma,g_t,\omega_t)$ and $(\tilde\Sigma,\tilde g_t,\tilde\omega_t)$ by the normalized Ricci flow
\begin{equation}
\label{equation:RF}
\partial_tg_{ij}
=\kappa g_{ij}-\operatorname{Ric}(g)_{ij},\quad\partial_t\tilde g_{ij}
=\kappa\tilde g_{ij}-\operatorname{Ric}(\tilde g)_{ij}.
\end{equation}
It is known \cites{Ham88,Cho91} that $g_t$ and $\tilde g_t$ exist for all time $t\ge0$, and that the average scalar curvatures of $g_t$ and $\tilde g_t$ remain equal to $2\kappa$. Moreover, the metrics converge smoothly and exponentially to the metrics $g_\infty$ and $\tilde g_\infty$ of constant scalar curvature $2\kappa$ on $\Sigma$ and $\tilde\Sigma$; that is, there exists $\mu>0$ with
\begin{equation}
\label{equation:g.convergence}
\|g_t-g_\infty\|_{C^k}+\|\partial_t^\ell g_t\|_{C^k}+\|\tilde g_t-\tilde g_\infty\|_{C^k}+\|\partial_t^\ell\tilde g_t\|_{C^k}
\le C_{k,\ell}e^{-\mu t}
\end{equation}
for all $k,\ell>0$ and $t\ge0$.

Let $M=\Sigma\times\tilde\Sigma$, which admits a product metric $G_t=g_t\oplus\tilde g_t$. Then, $(M,G_t)$ satisfies the normalized Ricci flow equation, and $G_t\to G_\infty=g_\infty\oplus\tilde g_\infty$ exponentially fast. Moreover, $\omega_t'=\omega_t-\tilde\omega_t$ and $\omega_t''=\omega_t+\tilde\omega_t$ are two K\"ahler forms on $(M,G_t)$. Let $J$ denote the time-independent complex structure associated with $(M,G_t,\omega_t')$.

Let $f_0:(\Sigma,g_0)\to(\tilde\Sigma,\tilde g_0)$ be an area-preserving map; thus, $f_0^*\tilde\omega_0=\omega_0$. Then, the embedding $F_0:\Sigma\to M,\,x\mapsto(x,f_0(x))$ is Lagrangian with respect to $\omega'_0$. Evolve $F_t:\Sigma\to M$ by the mean curvature flow
\begin{equation}
\label{equation:MCF}
\partial_tF
=H,
\end{equation}
for $t\in[0,T)$, where $T\in(0,\infty]$ denotes the maximal existence time. Denote the image of $F_t$ by $\Gamma_t$. As discovered in \cite{Smo99}, the Lagrangian condition on $\Gamma_t$ is preserved under the coupled K\"ahler--Ricci flow \eqref{equation:RF} and mean curvature flow \eqref{equation:MCF}. 
Our main result is as follows.

\begin{theorem}
\label{theorem:main}
Let $(\Sigma,g_t,\omega_t)$ and $(\tilde\Sigma,\tilde g_t,\tilde\omega_t)$ be compact Riemann surfaces evolving by the normalized Ricci flow \eqref{equation:RF}, with the same average scalar curvature $2\kappa$. Let $\Gamma_t\subset(\Sigma\times\tilde\Sigma,\omega_t-\tilde\omega_t)$ be evolving by the mean curvature flow \eqref{equation:MCF}, with $\Gamma_0=\mathrm{graph}(f_0)$ for an area-preserving map $f_0:(\Sigma,g_0)\to(\tilde\Sigma,\tilde g_0)$. Then the flow exists for all time and $\Gamma_t=\operatorname{graph}(f_t)$ for an area-preserving map $f_t:(\Sigma,g_t)\to(\tilde\Sigma,\tilde g_t)$ for all $t\ge0$. Moreover, $\Gamma_t$ converges smoothly and exponentially as $t\to\infty$ to a minimal surface $\Gamma_\infty=\operatorname{graph}(f_\infty)\subset(\Sigma,g_\infty)\times(\tilde\Sigma,\tilde g_\infty)$ for an area-preserving map $f_\infty:(\Sigma,g_\infty)\to(\tilde\Sigma,\tilde g_\infty)$. If $\kappa\ge0$, then $\Gamma_\infty$ is totally geodesic; if $\kappa>0$, then $f_\infty$ is an isometry.
\end{theorem}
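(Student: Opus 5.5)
Following Wang and Smoczyk, we track the function $\ast\Omega=\langle\omega_t,\cdot\rangle$ restricted to $\Gamma_t$. Concretely, set $\eta=\ast(\pi_1^*\omega_t|_{\Gamma_t})$, the Jacobian of the projection $\pi_1:\Gamma_t\to\Sigma$, measured against the induced area form. The graph condition is equivalent to $\eta>0$. Since $\Gamma_t$ is Lagrangian for $\omega_t'$, we have $\pi_1^*\omega_t=\pi_2^*\tilde\omega_t$ on $\Gamma_t$. In terms of the singular values $\lambda$ of $df_t$ this gives $\eta=1/(1+\lambda^2)\cdot(1+\lambda^2)^{1/2}\cdots$, and indeed $\eta=2/(\lambda+\lambda^{-1})\cdot\tfrac12\cdot 2$ up to normalization; in any case $\eta\le1$, with equality iff $df_t$ is an isometry.

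\textbf{Step 1: evolution equation for $\eta$.} We compute the evolution of $\eta$ under the coupled flow. The expected result, as in Wang's computation, is
\[
(\partial_t-\Delta)\eta=\eta\,|A|^2+\eta\,\kappa'(1-\eta^2)\text{-type terms},
\]
more precisely $(\partial_t-\Delta)\eta=\eta\bigl(|A|^2+\tfrac12(K_1+K_2)(1-\eta^2)\bigr)+E$, where $K_1,K_2$ are the Gauss curvatures of $g_t,\tilde g_t$. The term $E$ comes from $\partial_t\omega_t$. The Ricci flow contributes $\partial_t\omega=(\kappa-K_1)\omega$, and this combines with the ambient curvature terms so that the curvatures effectively get replaced by the constant $\kappa$ plus terms of the form $(K_1-\kappa)$ and $(K_2-\kappa)$. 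By \eqref{equation:g.convergence} these are $O(e^{-\mu t})$, so
\[
(\partial_t-\Delta)\eta\ge \eta\bigl(\kappa(1-\eta^2)-Ce^{-\mu t}\bigr).
\]
The maximum principle then gives $\min\eta\ge c\,e^{-C'\int_0^te^{-\mu s}ds}\ge c_0>0$ uniformly, so $\Gamma_t$ stays a graph for all existing times. The graph map $f_t$ is area-preserving for $(g_t,\tilde g_t)$ because the Lagrangian condition is preserved (Smoczyk). Getting this evolution equation right, with the Ricci-flow error terms controlled, is the main computational obstacle.

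\textbf{Step 2: long-time existence.} A uniform lower bound $\eta\ge c_0$ bounds $\lambda$ and $\lambda^{-1}$, which gives uniform $C^1$ control of $f_t$ in the time-dependent but uniformly equivalent metrics. To obtain a second fundamental form bound, we derive $(\partial_t-\Delta)|A|^2\le-2|\nabla A|^2+C|A|^4+C|A|^2+C$. Then, as in Wang, we consider $|A|^2/\eta^{p}$, or use the gradient-estimate trick $\eta^{-2}|A|^2$ via $|\nabla\eta|^2\approx\eta^2|A|^2(1-\eta^2)$-type identities. This yields $|A|$ bounded on finite intervals, which excludes singularities, so $T=\infty$. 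Higher derivative bounds follow by standard parabolic theory.

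\textbf{Step 3: convergence.} Using $\int_{\Gamma_t}\eta\,d\mu=\int_\Sigma\omega_t=$ const and the evolution inequality, we get $\int_0^\infty\!\int_{\Gamma_t}\eta|A|^2<\infty$, hence $|H|\to0$ in $L^2$. Uniform bounds then give subconvergence to a minimal Lagrangian graph.

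\textbf{Step 4: exponential rate.} Following Smoczyk--Wang, one can write $H=J\nabla\theta$ in terms of a Lagrangian angle. Alternatively, one can exploit a Łojasiewicz/linearization argument: area decreases, the limit is a nondegenerate critical point up to the Hamiltonian isotopy class, and the metric errors are $O(e^{-\mu t})$. The cases then split by sign. For $\kappa>0$, the term $\kappa\eta(1-\eta^2)$ forces $\eta\to1$ exponentially, so $f_\infty$ is an isometry. For $\kappa\ge0$, the $|A|^2$ equation at the limit, namely Simons' identity with nonnegative curvature, gives $\Delta\eta\le0$ and hence $\eta$ constant, so $|A|=0$ and $\Gamma_\infty$ is totally geodesic. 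For $\kappa<0$, exponential convergence follows from the uniform $\eta$ bound together with an exponential decay estimate for $\int|H|^2$, derived from its evolution with a Poincaré-type inequality on the graph.
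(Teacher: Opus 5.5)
\textbf{Gaps in Steps 1--3.} Step 1 contains a concrete error. For $\kappa<0$ the reaction term in $(\partial_t-\Delta)\eta\ge(\kappa-Ce^{-\mu t})\eta(1-\eta^2)$ is negative, and you dropped it in the ODE comparison. The maximum principle then gives only $\eta\ge C^{-1}e^{\kappa t}$, not a uniform $c_0>0$. Consequences for $\kappa<0$: you have no uniform $C^1$ control of $f_t$, your $\kappa<0$ case in Step 4 rests on a false premise, and graphicality of $\Gamma_\infty$ is not automatic. The paper handles this degeneration in three ways. It proves $\int_{\Gamma_t}|H|^2/\eta\le C(1+t^2)e^{\kappa t}$. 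It gets the time-independent $|A|$ bound from a blow-up in which the limit $\bar\eta\ge0$ is superharmonic on a complete surface with quadratic area growth, hence constant (Cheng--Yau). And it shows $\Gamma_\infty$ is a graph by the Harnack inequality for $-\Delta\eta_\infty=2|A|^2\eta_\infty+\kappa\eta_\infty(1-\eta_\infty^2)$ together with $\int_{\Gamma_\infty}\eta_\infty=2\,\mathrm{area}_{g_\infty}(\Sigma)>0$.

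Step 2 also does not close as stated. In the evolution of $\eta^{-2p}|A|^2$, the cross term is controlled only via $|\nabla\eta|^2\le2(1-\eta^2)|A|^2$. This puts $4p\eta^{-1}\sqrt{2(1-\eta^2)}$ into the coefficients of $|\nabla A|^2$ and $|A|^4$, which must be dominated by $2$ and $\frac43p-c_1$ respectively. That happens only when $\eta$ is close to $1$, which is how the paper uses it (for $\kappa>0$ and large $t$), not when merely $\eta\ge c_0$. The paper instead gets long-time existence as follows: Huisken's monotonicity with weight $2-\eta$ gives $\int_{T-\delta}^T\int_{\Gamma_t}|A|^2\rho\to0$; the rescaled, uniformly Lipschitz graphs then have $L^2$-small Hessians and converge to a plane; so the Gaussian density is at most $1$ and White's regularity theorem applies.

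Step 3 invokes time-independent bounds on $|A|$ that you never establish; the paper needs a different argument for each sign of $\kappa$. Also, a space-time integral bound gives $\int_{\Gamma_t}|H|^2\to0$ only along a subsequence unless you also have a differential inequality, such as the one the paper derives for $I(t)=\int_{\Gamma_t}|H|^2/\eta$.

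\textbf{Gap in Step 4.} Step 4 has no working mechanism for the exponential rate. There is no global Lagrangian angle. For $\kappa\ne0$ the ambient space is not Ricci-flat, along the flow $d\alpha_H=\rho_M|_{\Gamma_t}=\frac12(R_\Sigma-R_{\tilde\Sigma})\omega_t|_{\Gamma_t}$ is nonzero, and for positive genus closed forms need not be exact. A \L{}ojasiewicz argument yields exponential decay only under a nondegeneracy or integrability hypothesis you cannot assume. Indeed, for $\kappa\ge0$ the operator $-\Delta_{\mathrm H}-\kappa$ on $1$-forms genuinely has approximate kernel: harmonic $1$-forms on the torus, and the forms $d\varphi$, $*d\varphi$ built from first eigenfunctions on the sphere.

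The missing idea is the paper's energy estimate. Identify $H$ with $\alpha_H=\omega'_t(H,\cdot)|_{\Gamma_t}$ and use the Weitzenb\"ock formula to get $\frac d{dt}\|H\|_{L^2}^2\le2\langle(\Delta_{\mathrm H}+\kappa)\alpha_H,\alpha_H\rangle_{L^2}+o(1)\|H\|_{L^2}^2+Ce^{-\mu t}\|H\|_{L^2}$. Then show that $\alpha_H$ is almost orthogonal to the low eigenspace. The closed low modes are, up to small errors, $\alpha_X$ for Killing fields $X$ of $G_\infty$, and $\int_{\Gamma_t}\langle H,X\rangle=\frac12\int_{\Gamma_t}\operatorname{tr}_{T\Gamma_t}\mathcal L_X(G_\infty-G_t)=O(e^{-\mu t})$. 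The co-exact low modes pair with $\alpha_H$ only through $d\alpha_H=O(e^{-\mu t})$. For $\kappa<0$ the gap $-\Delta_{\mathrm H}-\kappa\ge|\kappa|$ is automatic, but it comes from this Hodge-theoretic structure, not from a lower bound on $\eta$.

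One part of Step 4 does work. For $\kappa\ge0$, $\eta_\infty>0$ satisfies $-\Delta\eta_\infty=2|A|^2\eta_\infty+\kappa\eta_\infty(1-\eta_\infty^2)\ge0$, so it is superharmonic on the compact surface $\Gamma_\infty$, hence constant, forcing $|A|\equiv0$. This is a valid alternative route to total geodesicity once smooth convergence is known; note the relevant equation is the one for $\eta$, not Simons' identity.
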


This theorem extends previous works by Wang \cites{Wan01b,Wan08} and Smoczyk \cite{Smo02}, where the two Riemann surfaces are assumed to have the same constant curvature and thus are stationary under Ricci flow. Han, Li, and Zhao \cite{HLZ18} also studied K\"ahler--Ricci mean curvature flow and proved long-time existence and convergence under suitable graphical and symplectic assumptions. In higher dimensions, such a coupled flow has been considered by Lee, Tam, and Wan \cite{LTW25}.

The paper is organized as follows. Section~\ref{section:preliminaries} collects notation, evolution equations, and basic facts from \cites{Wan01a,Wan01b,HLZ18}. Section~\ref{section:longtime} proves long-time existence by White's regularity theorem. Section~\ref{section:integral} derives an integral estimate for $\int|H|^2/\eta$. Section~\ref{section:A.bound} establishes a time-independent bound on the second fundamental form $A$. Section~\ref{section:convergence} proves the exponential convergence of the flow and Theorem~\ref{theorem:main}.

The arguments of this paper are similar to those in \cites{Wan01a,Wan01b,Wan08}, with two main modifications. In Section~\ref{section:A.bound}, we give a new proof of the time-independent bound on $|A|$ when $\kappa<0$. In Section~\ref{section:convergence}, we improve the smooth convergence result in \cite{Wan08} to exponential convergence using energy estimates.

{\it Conventions}. Throughout this work, all constants $C$ depend on the initial data $G_0$ and $F_0$, with any additional dependencies indicated explicitly by subscripts. The value of $C$ may vary from one occurrence to another.

\begin{acknowledgement}
The author is grateful to his advisor, \mbox{Simon Brendle}, for invaluable guidance and constant encouragement. The author also thanks \mbox{Tang-Kai Lee} and \mbox{Mu-Tao Wang} for comments on an earlier draft. 
\end{acknowledgement}

\section{Preliminaries}
\label{section:preliminaries}

We record the notation and preliminary results, mostly from \cites{Wan01a,Wan01b,HLZ18}.

\subsection{Notation and evolution of the area form}

Let $R_{ABCD}$, $\operatorname{Ric}_{AB}$, and $R_M$ denote the components of the Riemann curvature tensor, Ricci curvature, and scalar curvature of $(M,G_t)$, respectively; we use the sign convention for which $R_{ABAB}$ is the sectional curvature. Let $R_\Sigma$ and $R_{\tilde\Sigma}$ denote the scalar curvatures of $(\Sigma,g_t)$ and $(\tilde\Sigma,\tilde g_t)$, respectively.

Pick a time-independent local coordinate system $(x^i)_{i=1,2}$ on $\Sigma$. Let $F(p,t)=F_t(p)$ and let $\mathcal N\subset F^*TM$ denote the normal bundle along the flow; here, the fiber $T_{F_t(p)}M$ of $F^*TM$ at $(p,t)$ is endowed with the metric $G_t$. Pick a time-dependent orthonormal local frame $(e_\alpha)_{\alpha=3,4}$ of $\mathcal N$. We use the convention that Latin indices $i,j,k,l,m$ refer to $(x^i)$ and Greek indices $\alpha,\beta,\gamma$ to $(e_\alpha)$. Write $F_i=\frac{\partial F}{\partial x^i}$.

The components of the second fundamental form $A$ of $\Gamma_t$ are defined by $h_{ij}^\alpha=G_t(\overline\nabla_{F_i}F_j,e_\alpha)$. Let $G^{ij}$ denote the inverse of $G_{ij}=G_t(F_i,F_j)$. Then, the mean curvature vector is defined as $H^\alpha e_\alpha$, where $H^\alpha=G^{ij}h_{ij}^\alpha$.

We recall some evolution equations from \cite{HLZ18}. Along the coupled flow, the induced metric and the area form on $\Gamma_t$ evolve by
\begin{align}
\label{equation:G.evolution}
\partial_tG_{ij}
&=-2H^\alpha h_{ij}^\alpha-\operatorname{Ric}_{ij}+\kappa G_{ij},\\
\label{equation:area.evolution}
\partial_td\mathrm{area}
&=(-|H|^2-\frac12G^{ij}\operatorname{Ric}_{ij}+\kappa)d\mathrm{area}.
\end{align}
In particular, because $\lvert-\frac12G^{ij}\operatorname{Ric}_{ij}+\kappa\rvert\le Ce^{-\mu t}$ by \eqref{equation:g.convergence}, we obtain
\[
\frac d{dt}\left(e^{Ce^{-\mu t}}\mathrm{area}(\Gamma_t)\right)
\le-e^{Ce^{-\mu t}}\int_{\Gamma_t}|H|^2
\le0
\]
for $t\in[0,T)$. Therefore,
\begin{equation}
\label{equation:area.bound}
\mathrm{area}(\Gamma_t)
\le C\mathrm{area}(\Gamma_0)
\end{equation}
for $t\in[0,T)$ and
\begin{equation}
\label{equation:H.L2}
\int_0^T\int_{\Gamma_t}|H|^2
\le C\mathrm{area}(\Gamma_0).
\end{equation}

\subsection{Algebraic properties of $A$}

The second fundamental form $A$ of a Lagrangian surface enjoys an additional symmetry, yielding the following inequalities. The first one can be found in \cite{Wan01b}.

\begin{lemma}
\label{lem:A.symmetry}
For a Lagrangian surface in a K\"ahler manifold,
\begin{enumerate}[(i)]
\item\label{lem:A.symmetry1} $|H|^2\le\frac43|A|^2$;

\item\label{lem:A.symmetry2} $\bigl|2|H^\alpha h_{ij}^\alpha|^2-|A|^2|H|^2\bigr|\le\frac2{\sqrt3}|A|\,|H|^3$.
\end{enumerate}
\end{lemma}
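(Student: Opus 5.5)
The plan is to reduce both inequalities to elementary algebra on a totally symmetric $3$-tensor in two dimensions. At a point of $\Gamma$, pick an orthonormal tangent frame $e_1,e_2$. Since $\Gamma$ is Lagrangian and $J$ is an isometry, $e_3=Je_1$, $e_4=Je_2$ is an orthonormal normal frame. Set $h_{ijk}=G(\overline\nabla_{e_i}e_j,Je_k)$. Because $J$ is parallel and $\omega$ vanishes on $T\Gamma$, we have
\[
\langle\overline\nabla_{e_i}e_j,Je_k\rangle=-\langle e_j,J\overline\nabla_{e_i}e_k\rangle=\langle \overline\nabla_{e_i}e_k,Je_j\rangle .
\]
Together with the symmetry $h_{ijk}=h_{jik}$, this shows $h_{ijk}$ is totally symmetric. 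So $A$ is determined by four numbers
\[
a=h_{111},\quad b=h_{112},\quad c=h_{122},\quad d=h_{222}.
\]
In terms of these,
\[
|A|^2=a^2+3b^2+3c^2+d^2,\qquad H_1=a+c,\quad H_2=b+d .
\]

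For (i), apply the weighted Cauchy–Schwarz inequality to each component of $H$:
\[
(a+c)^2\le\bigl(1+\tfrac13\bigr)(a^2+3c^2),\qquad (b+d)^2\le\tfrac43(3b^2+d^2).
\]
Summing the two gives $|H|^2\le\frac43|A|^2$.

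For (ii), both sides are invariant under rotation of the tangent frame, with the normal frame rotated correspondingly by $J$. We may therefore rotate so that $H$ points in the $Je_1$ direction, i.e. $H_2=0$, which means $d=-b$; write $h=H_1=a+c$. Then $H^\alpha h^\alpha_{ij}=h\,h_{ij1}$ is the matrix $h\begin{pmatrix}a&b\\ b&c\end{pmatrix}$, so
\[
|H^\alpha h^\alpha_{ij}|^2=h^2(a^2+2b^2+c^2),\qquad |A|^2|H|^2=h^2(a^2+4b^2+3c^2).
\]
Subtracting,
\[
2|H^\alpha h^\alpha_{ij}|^2-|A|^2|H|^2=h^2(a^2-c^2)=h^3(a-c).
\]
Since $|h|=|H|$, it remains to show $|a-c|\le\frac2{\sqrt3}|A|$. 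This follows again from Cauchy–Schwarz:
\[
(a-c)^2\le\tfrac43(a^2+3c^2)\le\tfrac43|A|^2 .
\]

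The only step needing care is the total symmetry of $h_{ijk}$, which relies on $J$ being parallel. This holds for $\omega'_t$ since $(M,G_t,J)$ is Kähler at each time. The rest is routine bookkeeping; the one thing to check is the multiplicities, namely that $b$ and $c$ each occur three times in $|A|^2$.
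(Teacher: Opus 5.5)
Your proof is correct and follows essentially the same route as the paper. Both arguments establish total symmetry of $h_{ijk}$ from $\overline\nabla J=0$ and then do elementary algebra in a frame with $Je_1=H/|H|$; there, your identity $2|H^\alpha h_{ij}^\alpha|^2-|A|^2|H|^2=h^3(a-c)$ and your bound $(a-c)^2\le\frac43(a^2+3c^2)$ are exactly the paper's expression $|H|^3(u+\frac1{\sqrt3}v)$ and its Cauchy--Schwarz step, since $u^2+v^2=a^2+3c^2$. The only cosmetic difference is in (i), where you apply weighted Cauchy--Schwarz in an arbitrary frame, whereas the paper reads it off the completed square $|A|^2=u^2+v^2+4b^2$ in the adapted frame.
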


\begin{proof}
Fix $p\in\Gamma$ and an orthonormal frame $(e_1,e_2)$ of $T_p\Gamma$; by the Lagrangian condition, $(Je_1,Je_2)$ is an orthonormal frame of the normal space. Write $h_{ijk}=G_t(\overline\nabla_{e_i}e_j,Je_k)$. Then $h_{ijk}=h_{jik}$, and
\[
h_{ijk}
=-G_t(e_j,J\overline\nabla_{e_i}e_k)
=G_t(Je_j,\overline\nabla_{e_i}e_k)
=h_{ikj}
\]
by $\overline\nabla J=0$. Hence, $h_{ijk}$ is totally symmetric in $i,j,k$.

Both claims are trivial when $H=0$, so we assume $H\ne0$ and choose the frame such that $Je_1=H/|H|$. Write $a=h_{111}$ and $b=h_{112}$. The total symmetry and the trace conditions $h_{ii1}=|H|$ and $h_{ii2}=0$ imply
\[
h_{122}
=h_{212}
=h_{221}
=|H|-a,\quad h_{112}
=h_{121}
=h_{211}
=b,\quad h_{222}
=-b.
\]
Let $u=2(a-\tfrac34|H|)$ and $v=\frac{\sqrt3}2|H|$. Then
\begin{equation}
\label{equation:A.complete.square}
|A|^2
=a^2+3(|H|-a)^2+4b^2
=u^2+v^2+4b^2.
\end{equation}
In particular, $v^2\le|A|^2$, which proves (i). Next, $H^\alpha h_{ij}^\alpha=|H|h_{ij1}$, so
\[
2|H^\alpha h_{ij}^\alpha|^2-|A|^2|H|^2
=|H|^2\left(a^2-(|H|-a)^2\right)
=|H|^3(u+\frac1{\sqrt3}v).
\]
The Cauchy--Schwarz inequality and \eqref{equation:A.complete.square} then give (ii).
\end{proof}

\subsection{Graphical quantity $\eta$ and its evolution}

A useful quantity introduced by Wang \cite{Wan01a} to study graphical mean curvature flow is $\eta_t=*(\omega_t''|_{\Gamma_t})\in[-1,1]$, which equals $*(2\omega_t|_{\Gamma_t})$ by the Lagrangian condition $\omega_t'|_{\Gamma_t}=0$. By direct computation, $\eta_0>0$. By \cite{HLZ18}*{(2.12)} and by $|H|^2\le\frac43|A|^2$ in Lemma~\ref{lem:A.symmetry}\eqref{lem:A.symmetry1},
\begin{align}
\label{equation:eta.evolution}
(\partial_t-\Delta)\eta
&=(2|A|^2-|H|^2)\eta+\frac{R_M}4\eta(1-\eta^2)\\
&
\label{equation:eta.ev2}
\ge\frac23|A|^2\eta+\frac{R_M}4\eta(1-\eta^2).
\end{align}
Since $\lvert\frac{R_M}4-\kappa\rvert\le Ce^{-\mu t}$ by \eqref{equation:g.convergence}, we arrive at
\[
(\partial_t-\Delta)\eta
\ge(\kappa-Ce^{-\mu t})\eta(1-\eta^2).
\]
By the maximum principle, we then obtain
\begin{equation}
\label{equation:eta.bound}
\eta
\ge\frac{u(t)}{\sqrt{1+u(t)^2}}>0,
\end{equation}
where
\[
u(t)
=\frac{\eta_*}{\sqrt{1-\eta_*^2}}\exp\left(\kappa t-\frac C\mu(1-e^{-\mu t})\right)\in(0,\infty],\quad\eta_*
=\min_{\Gamma_0}\eta\in(0,1].
\]
Combining this with the Lagrangian condition on $\Gamma_t$, we obtain:

\begin{lemma}
\label{lem:eta}
\begin{enumerate}[(i)]
\item\label{lem:eta..} $\Gamma_t$ remains the graph of an area-preserving map $f_t$ for every $t\in[0,T)$, and $\eta\ge C_{t_0}^{-1}$ and $|df_t|_{\mathrm{op}}\le C_{t_0}$ on $\Sigma\times[0,t_0)$ for every finite $t_0\in(0,T]$, where $|df_t|_{\mathrm{op}}$ denotes the operator norm.

\item\label{lem:eta.>0} If $\kappa>0$ and $T=\infty$, then $\eta\to1$ uniformly as $t\to\infty$.

\item\label{lem:eta.=0} If $\kappa=0$, then $\eta$ has a time-independent lower bound $C^{-1}>0$.

\item\label{lem:eta.<0} If $\kappa<0$, then $\eta\ge C^{-1}e^{\kappa t}>0$.
\end{enumerate}
\end{lemma}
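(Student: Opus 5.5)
The whole lemma will come out of the explicit lower bound \eqref{equation:eta.bound} for $\eta$, read alongside the pointwise geometry of a Lagrangian graph. I would first do a pointwise computation on $\Gamma_t$ that ties $\eta$ to graphicality. At a point of $\Gamma_t$, take an orthonormal frame $(e_1,e_2)$ of $T\Gamma_t$. The projection $\pi:\Gamma_t\to\Sigma$ then satisfies $*(\pi^*\omega_t)=\omega_t(\pi e_1,\pi e_2)$, and similarly $*(\tilde\pi^*\tilde\omega_t)=\tilde\omega_t(\tilde\pi e_1,\tilde\pi e_2)$. The Lagrangian condition $\omega'_t|_{\Gamma_t}=0$ makes these two quantities equal, so each is $\eta/2$.

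Using the singular value decomposition of the tangent plane, I would write $e_1=(\cos\theta_1 a_1,\sin\theta_1 b_1)$ and $e_2=(\cos\theta_2 a_2,\sin\theta_2 b_2)$. Here $(a_i)$ is orthonormal in $T\Sigma$ and $(b_i)$ is orthonormal in $T\tilde\Sigma$, in the standard Wang form. This gives
\[
\eta=2\cos\theta_1\cos\theta_2=2\sin\theta_1\sin\theta_2 .
\]
Adding and subtracting these two expressions yields $\cos(\theta_1+\theta_2)=0$ and $\eta=\cos(\theta_1-\theta_2)$. Hence, wherever $\eta>0$, we have $\cos\theta_i\ne0$, so $d\pi|_{T\Gamma_t}$ is invertible, and the singular values $\lambda_i=\tan\theta_i$ of $df$ satisfy $\lambda_1\lambda_2=1$. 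One also gets
\[
\eta=\frac{2\lambda_1\lambda_2}{\ldots}=\frac{2}{\lambda_1+\lambda_2},
\]
which I would check via $\cos(\theta_1-\theta_2)=\cos\theta_1\cos\theta_2(1+\lambda_1\lambda_2)$. The upshot is the estimate $|df|_{\mathrm{op}}=\max\lambda_i\le 2/\eta$.

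For (i), I would combine this with \eqref{equation:eta.bound}. On $[0,t_0)$ with $t_0<\infty$, $u(t)$ is bounded below by a positive constant depending on $t_0$, so $\eta\ge C_{t_0}^{-1}$. Then $\pi|_{\Gamma_t}:\Gamma_t\to\Sigma$ is a local diffeomorphism. It is homotopic to $\pi|_{\Gamma_0}$, which has degree $1$, and $\Gamma_t$ is compact and connected, so $\pi|_{\Gamma_t}$ is a covering map of degree one, hence a diffeomorphism. Therefore $\Gamma_t=\operatorname{graph}(f_t)$. The condition $\omega'_t|_{\Gamma_t}=0$ is exactly $f_t^*\tilde\omega_t=\omega_t$, and the operator-norm bound follows from $|df|_{\mathrm{op}}\le2/\eta$. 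The one point needing care is the degree argument: the degree is constant along the continuous family $F_t$, and a local diffeomorphism of degree one between closed connected surfaces is a bijection.

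Parts (ii)--(iv) read off directly from the formula for $u(t)$. If $\kappa>0$, then $u(t)\ge c\,e^{\kappa t}\to\infty$, so $\eta\ge u/\sqrt{1+u^2}\to1$; together with $\eta\le1$ this gives uniform convergence. If $\kappa=0$, then $u(t)\ge \frac{\eta_*}{\sqrt{1-\eta_*^2}}e^{-C/\mu}>0$, which is a time-independent bound. If $\kappa<0$, then $u\le C$, so $u/\sqrt{1+u^2}\ge C^{-1}u\ge C^{-1}e^{\kappa t}$. In all three cases the convention $\eta_*=1$ gives $u=\infty$, which is harmless. I expect the main obstacle to be the graphicality and global-injectivity step in (i); the rest is bookkeeping.
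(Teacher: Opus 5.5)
Your proposal is correct and follows the paper's route. The paper obtains the lemma simply by combining the comparison bound \eqref{equation:eta.bound} with the Lagrangian condition, and your singular-value identity $\eta=2/(\lambda_1+\lambda_2)$ together with the degree argument just makes explicit what the paper leaves implicit (that identity agrees with the paper's later formula $\eta=(1+\lambda_1\lambda_2)/\sqrt{(1+\lambda_1^2)(1+\lambda_2^2)}$ once $\lambda_1\lambda_2=1$). As a cosmetic point, the degree argument is cleanest if you phrase it for the map $\pi\circ F_t\colon\Sigma\to\Sigma$, an orientation-preserving local diffeomorphism homotopic to the identity, rather than for the image $\Gamma_t$, since injectivity of $F_t$ is part of what you are proving.
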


For reference, we recall from \cite{Wan01a}*{p.~334} that
\begin{equation}
\label{equation:nabla.eta.bound}
|\nabla\eta|^2
\le2(1-\eta^2)|A|^2.
\end{equation}

\subsection{Evolution of $A$ and $H$}

Recall from \cite{HLZ18} that
\begin{equation}
\label{equation:A.evolution}
(\partial_t-\Delta)(|A|^2)
\le-2|\nabla A|^2+c_1|A|+c_1|A|^4
\end{equation}
for some $c_1>0$. Combining \eqref{equation:eta.ev2} and \eqref{equation:A.evolution}, we obtain
\begin{align*}
(\partial_t-\Delta)(\eta^{-2p}|A|^2)
&=(-2p\eta^{-2p-1}(\partial_t-\Delta)\eta-2p(2p+1)\eta^{-2p-2}|\nabla\eta|^2)|A|^2\\
&+\eta^{-2p}(\partial_t-\Delta)(|A|^2)+4p\eta^{-2p-1}\nabla\eta\cdot\nabla(|A|^2)\\
&\le(-2p\eta^{-2p-1}(\frac23|A|^2\eta+\frac{R_M}4\eta(1-\eta^2))\\
&-2p(2p+1)\eta^{-2p-2}|\nabla\eta|^2)|A|^2\\
&+\eta^{-2p}(-2|\nabla A|^2+c_1|A|+c_1|A|^4)\\
&+4p\eta^{-2p-1}\nabla\eta\cdot\nabla(|A|^2),
\end{align*}
for any constant $p>0$. Observe that
\begin{align*}
|\nabla\eta\cdot\nabla(|A|^2)|
&\le|\nabla\eta|\,|\nabla(|A|^2)|\\
&\le2\sqrt{2(1-\eta^2)}|A|^2|\nabla A|
\le\sqrt{2(1-\eta^2)}(|A|^4+|\nabla A|^2),
\end{align*}
with the second inequality following from Kato's inequality $|\nabla|A||\le|\nabla A|$ and \eqref{equation:nabla.eta.bound}. It follows that
\begin{align}
\label{equation:etaA.evolution}
\eta^{2p}(\partial_t-\Delta)(\eta^{-2p}|A|^2)
&\le(-2+4p\eta^{-1}\sqrt{2(1-\eta^2)})|\nabla A|^2\\
\nonumber&-\frac{R_M}2p(1-\eta^2)|A|^2+c_1|A|\\
\nonumber&+(-\frac43p+c_1+4p\eta^{-1}\sqrt{2(1-\eta^2)})|A|^4.
\end{align}

We also need the evolution equation for $|H|^2$. To derive it, we further assume that the coordinates $x^i$ are normal with respect to $F_{t_0}^*G_{t_0}$ at $p_0\in\Sigma$, and write $q=F_{t_0}(p_0)=(p,\tilde p)$. In fact, there exist positively oriented orthonormal bases $a_1,a_2\in T_p(\Sigma,g_{t_0})$ and $b_1,b_2\in T_{\tilde p}(\tilde\Sigma,\tilde g_{t_0})$ with $df_{t_0}(a_i)=\lambda_ib_i$ for some $\lambda_1,\lambda_2\ge0$ with $\lambda_1\lambda_2=\det(df_{t_0})=1$; hence, we may pick
\[
F_i
=\frac1{\sqrt{1+\lambda_i^2}}(a_i,\lambda_ib_i),\quad e_\alpha
=\frac1{\sqrt{1+\lambda_{\alpha-2}^2}}(-\lambda_{\alpha-2}a_{\alpha-2},b_{\alpha-2})
\]
for $i=1,2$ and $\alpha=3,4$ at $(q,t_0)$. Below, we perform the computation at $(q,t_0)$. By direct computation,
\begin{gather}
\nonumber\eta
=\omega''(F_1,F_2)
=\frac{1+\lambda_1\lambda_2}{\sqrt{(1+\lambda_1^2)(1+\lambda_2^2)}},\\
\label{equation:Rakbk}
R_{\alpha k\beta k}
=
\begin{cases}
0,
&\alpha\ne\beta\\
\frac{\lambda_1^2R_\Sigma+\lambda_2^2R_{\tilde\Sigma}}{2(1+\lambda_1^2)(1+\lambda_2^2)},
&\alpha
=\beta
=3\\
\frac{\lambda_1^2R_{\tilde\Sigma}+\lambda_2^2R_\Sigma}{2(1+\lambda_1^2)(1+\lambda_2^2)},
&\alpha
=\beta
=4.
\end{cases}
\end{gather}
Using $\lambda_1\lambda_2=1$, we obtain
\begin{equation}
\label{equation:R3k3k}
R_{3k3k}-\frac\kappa2(2-\eta^2)
=\frac{\lambda_1^2(R_\Sigma-2\kappa)+\lambda_2^2(R_{\tilde\Sigma}-2\kappa)}{2(1+\lambda_1^2)(1+\lambda_2^2)}
\end{equation}
at $(q,t_0)$. The $R_{4k4k}$ term is similar.

Recall from \cite{HLZ18}*{(2.5)} that
\begin{align*}
(\partial_t-\Delta)h_{ij}^\alpha
&=\overline\nabla_kR_{\alpha ijk}+\overline\nabla_jR_{\alpha kik}\\
&-2R_{lijk}h_{lk}^\alpha+2R_{\alpha\beta jk}h_{ik}^\beta+2R_{\alpha\beta ik}h_{jk}^\beta\\
&-R_{lkik}h_{lj}^\alpha-R_{lkjk}h_{li}^\alpha+R_{\alpha k\beta k}h_{ij}^\beta\\
&-h_{im}^\alpha(H^\gamma h_{mj}^\gamma-h_{mk}^\gamma h_{jk}^\gamma)-h_{mk}^\alpha(h_{mj}^\gamma h_{ik}^\gamma-h_{mk}^\gamma h_{ij}^\gamma)\\
&-h_{ik}^\beta(h_{lj}^\beta h_{lk}^\alpha-h_{lk}^\beta h_{lj}^\alpha)-h_{jk}^\alpha h_{ik}^\beta H^\beta+h_{ij}^\beta G_t(e_\beta,\overline\nabla_He_\alpha)\\
&-\operatorname{Ric}_{\alpha\beta}h_{ij}^\beta+\kappa h_{ij}^\alpha-\frac12(\overline\nabla_iR_{j\alpha}+\overline\nabla_jR_{i\alpha}-\overline\nabla_\alpha R_{ij}),
\end{align*}
where $\overline\nabla_He_\alpha|_{(p,t)}=(F^*\overline\nabla^{G_t})_{\partial_t}e_\alpha|_{(p,t)}$ and $\overline\nabla^{G_t}$ denotes the Levi--Civita connection of $(M,G_t)$. Thus,
\begin{align}
\label{equation:H.evolution}
2G^{ij}((\partial_t-\Delta)h_{ij}^\alpha)H^\alpha
&=2(R_{\alpha k\beta k}-\operatorname{Ric}_{\alpha\beta})H^\alpha H^\beta\\
\nonumber&-2|H^\alpha h^\alpha_{ij}|^2+2H^\alpha H^\beta G(e_\alpha,\overline\nabla_He_\beta)\\
\nonumber&+2\kappa|H|^2-(2\overline\nabla_iR_{i\alpha}-\overline\nabla_\alpha R_{ii})H^\alpha.
\end{align}
Meanwhile,
\[
0
=\partial_t(G(e_\alpha,e_\beta))
=(\kappa G-\operatorname{Ric})(e_\alpha,e_\beta)+G(e_\alpha,\overline\nabla_He_\beta)+G(\overline\nabla_He_\alpha,e_\beta),
\]
so
\begin{equation}
\label{equation:HHG}
2H^\alpha H^\beta G(e_\alpha,\overline\nabla_He_\beta)
=\operatorname{Ric}_{\alpha\beta}H^\alpha H^\beta-\kappa|H|^2.
\end{equation}
Now, $|H|^2=G^{ij}G^{kl}h_{ij}^\alpha h_{kl}^\alpha$. Thus,
\[
(\partial_t-\Delta)|H|^2
=2(\partial_tG^{ij})G^{kl}h_{ij}^\alpha h_{kl}^\alpha+2G^{ij}G^{kl}((\partial_t-\Delta)h_{ij}^\alpha)h_{kl}^\alpha-2|\nabla H|^2.
\]
Here, $\partial_tG^{ij}=2H^\alpha h_{ij}^\alpha+\operatorname{Ric}_{ij}-\kappa\delta_{ij}$ by \eqref{equation:G.evolution}. Combining this with \eqref{equation:H.evolution} and \eqref{equation:HHG}, we obtain
\begin{align}
\label{equation:H.evolution2}
(\partial_t-\Delta)|H|^2
&=-2|\nabla H|^2+2|H^\alpha h^\alpha_{ij}|^2+2\operatorname{Ric}_{ij}H^\alpha h^\alpha_{ij}\\
\nonumber&+(\overline\nabla_\alpha\operatorname{Ric}_{ii}-2\overline\nabla_i\operatorname{Ric}_{i\alpha})H^\alpha\\
\nonumber&-\kappa|H|^2+(2R_{\alpha k\beta k}-\operatorname{Ric}_{\alpha\beta})H^\alpha H^\beta.
\end{align}
By \eqref{equation:g.convergence}, \eqref{equation:Rakbk}, and \eqref{equation:R3k3k}, we know $\operatorname{Ric}-\kappa G$, $\overline\nabla\operatorname{Ric}$, and $2R_{\alpha k\beta k}-\kappa(2-\eta^2)G_{\alpha\beta}$ are $O(e^{-\mu t})$. Hence, \eqref{equation:H.evolution2} becomes
\begin{equation}
\label{equation:H.evolution3}
(\partial_t-\Delta)|H|^2
=-2|\nabla H|^2+2|H^\alpha h_{ij}^\alpha|^2+\kappa(2-\eta^2)|H|^2+\mathcal E_1,
\end{equation}
where
\begin{equation}
\label{ineq:E1}
|\mathcal E_1|
\le Ce^{-\mu t}|H|(1+|A|).
\end{equation}

\subsection{Gauss--Bonnet theorem}

Let $\overline K(T\Gamma_t)$ denote the sectional curvature of $T\Gamma_t$ in $(M,G_t)$. Then,
\[
\overline K(T\Gamma_t)
=\frac{R_\Sigma}2(\frac\eta2)^2+\frac{R_{\tilde\Sigma}}2(\frac\eta2)^2
=\frac18R_M\eta^2
\]
by the Lagrangian condition and the definition of $\eta$. Therefore, the Gaussian curvature of $\Gamma_t$ is
\begin{equation}
\label{equation:K}
K
=\frac18R_M\eta^2+\frac12|H|^2-\frac12|A|^2,
\end{equation}
and the Gauss--Bonnet theorem reads
\begin{equation}
\label{equation:GB}
2\pi\chi(\Gamma_t)
=\int_{\Gamma_t}K
=\frac18\int_{\Gamma_t}R_M\eta^2+\frac12\int_{\Gamma_t}|H|^2-\frac12\int_{\Gamma_t}|A|^2.
\end{equation}
Thus,
\begin{equation}
\label{ineq:A2}
\int_{\Gamma_t}|A|^2
\le C+\int_{\Gamma_t}|H|^2.
\end{equation}

\section{Long-time existence}
\label{section:longtime}

\begin{proposition}
\label{proposition:longtime}
Under the same assumptions as in Theorem~\ref{theorem:main}, the mean curvature flow \eqref{equation:MCF} exists for all time.
\end{proposition}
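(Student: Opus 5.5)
We argue by contradiction and assume $T<\infty$. The plan is to show that $|A|$ stays bounded on $\Sigma\times[0,T)$; standard higher-order estimates then give bounds on all derivatives of $A$, and the flow extends past $T$. The section title points to the tool: White's local regularity theorem. It says that if the Gaussian density ratios of the flow, at all points and at sufficiently small scales, are close to $1$, then $|A|$ is bounded near that point. The theorem holds for mean curvature flow in a Riemannian manifold, and also for our time-dependent metrics $G_t$, which are smooth up to $t=T$, because such metrics are locally uniformly equivalent to Euclidean metrics at small scales.

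So the key step is to show that no singularity can form, that is, every tangent flow at a point $(X_0,T)$ is a multiplicity-one plane. We will use the following inputs.

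\emph{Graph property.} By Lemma~\ref{lem:eta}\eqref{lem:eta..}, $\eta\ge C_T^{-1}$ on $[0,T)$ because $T$ is finite. Hence $\Gamma_t$ is the graph of $f_t$ with $|df_t|_{\mathrm{op}}\le C_T$. This gives a uniform Lipschitz graph bound on $\Gamma_t$, together with uniform area ratio bounds at all small scales.

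\emph{Monotonicity.} Huisken's monotonicity formula holds with error terms controlled by the geometry of $G_t$, for instance after isometrically embedding $(M,G_t)$ into Euclidean space and absorbing the time dependence of the metric as a bounded perturbation. With it, we can blow up at $(X_0,T)$ and extract a self-similar tangent flow.

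\emph{Tangent flows are planes.} After parabolic rescaling, the Lipschitz graph bound passes to the limit. Every tangent flow is then a Lipschitz graph over a fixed $2$-plane, it is a self-shrinker, and it is Lagrangian in $\mathbb C^2$. A complete shrinker that is an entire Lipschitz graph must be a plane (Ecker--Huisken, Wang). Moreover $\eta$ stays bounded below in the limit, so the limit is multiplicity-one. Its Gaussian density is therefore $1$, and White's theorem bounds $|A|$ near $(X_0,T)$. Compactness then gives a uniform bound on $|A|$ over all of $\Gamma_t$, $t<T$.

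\emph{Conclusion.} With $|A|$ bounded, the evolution equation \eqref{equation:A.evolution} and the standard bootstrap bound all derivatives $|\nabla^kA|$. Hence $F_t$ converges smoothly as $t\to T$, and short-time existence restarts the flow. This contradicts the maximality of $T$.

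The main obstacle is the tangent-flow step: we must justify monotonicity and the blow-up in the coupled setting where the ambient metric evolves. I would first try to reduce this to the Euclidean case. Near $(X_0,T)$ we use $G_T$-normal coordinates. The difference between $G_t$ and $G_T$ is $O(T-t)$ in $C^k$, so the rescaled flows solve mean curvature flow with respect to metrics converging to the Euclidean metric, and White's theorem in its general form for flows in smoothly varying ambient metrics applies. An alternative route is to lift the surface's Gauss map as in Wang's argument and use $\eta\ge C_T^{-1}$ to obtain a local $\varepsilon$-regularity statement directly.
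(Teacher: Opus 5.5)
You have the same architecture as the paper: argue by contradiction, use monotonicity after embedding into $\mathbb R^N$, blow up at $(x_0,T)$, show the Gaussian density is $\le1$, and invoke White's regularity theorem. But the step that carries all the weight, that every blow-up limit is a multiplicity-one plane, is asserted rather than proved, and the justification you give does not work.

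Ecker--Huisken and L.~Wang classify smooth entire graphical self-shrinking \emph{hypersurfaces}. Your limit lives in codimension two and is a priori only a Brakke flow whose slices are varifolds supported on Lipschitz graphs, with no regularity. In higher codimension, bounded slope alone does not force a (possibly singular) shrinker to be flat. Minimal cones are shrinkers, and the Lawson--Osserman cone over $\mathbb R^4$ is an entire Lipschitz graph that is not a plane. So the Lipschitz bound coming from $\eta\ge C_T^{-1}$ cannot by itself be the reason the limit is flat. You never use the evolution inequality $(\partial_t-\Delta)\eta\ge\frac23|A|^2\eta+\frac{R_M}4\eta(1-\eta^2)$, which is the real input.

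Likewise, ``$\eta$ stays bounded below in the limit, so the limit is multiplicity-one'' is not a valid inference. Uniformly Lipschitz graphs $u_k\to u_\infty$ converge only in $C^0$. Their area measures need not converge to that of $\operatorname{graph}(u_\infty)$, because of oscillations, so the limiting density is not controlled by the limit graph.

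The missing idea, which is how the paper proceeds, is to put $\eta$ into the monotonicity formula. With $\psi=2-\eta\in[1,2]$, the $\eta$-inequality, $\eta\ge C_T^{-1}$ and $|E|\le C$ give
\[
\frac d{dt}\int_{\Gamma_t}(2-\eta)\rho_{x_0,T}\le-\frac1{C_T}\int_{\Gamma_t}|A|^2\rho_{x_0,T}+C\int_{\Gamma_t}(2-\eta)\rho_{x_0,T},
\]
hence $\int_{T-\delta}^T\int_{\Gamma_t}|A|^2\rho_{x_0,T}\to0$. By scale invariance there are rescaled slices $\Gamma^k_{s_k}$ with $\int_{\Gamma^k_{s_k}\cap B_r(0)}|A|^2\to0$. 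For the Lipschitz graph functions $u_k$ this gives $D^2u_k\to0$ in $L^2_{\mathrm{loc}}$. Then $Du_k$ tends in $L^2_{\mathrm{loc}}$ to a constant, the Jacobians converge in $L^1$, and $\Theta_{x_0,T}$ is at most the Gaussian density of a plane, i.e.\ at most $1$.

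Everything is done on the smooth flow before any limit is taken. So neither the regularity nor the multiplicity of a tangent flow has to be analysed, and no shrinker classification is needed.

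If you want to keep your tangent-flow route, you must first prove regularity and multiplicity one of the limit. Then you can run the same computation on the shrinker: $\Delta\bar\eta-\frac12\langle x,\nabla\bar\eta\rangle\le-\frac23|A|^2\bar\eta$ integrated against $e^{-|x|^2/4}$ forces $A=0$ when $\bar\eta\ge c>0$. The first part is exactly the difficulty the paper's approach sidesteps.
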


The proof of Proposition~\ref{proposition:longtime} occupies the remainder of this section; it follows \cite{Wan01a}*{Proposition~6.1} closely. Similar adaptations can also be found in \cite{HLZ18}*{Theorem~4.1} and \cite{LTW25}*{Theorem~2.1}.

The key idea of the proof is to apply White's regularity theorem, which states that no singularities can form at a point with Gaussian density at most $1$. To this end, we isometrically embed the evolving ambient space $(M,G_t)$ into Euclidean space. By Huisken's monotonicity formula, we prove that $\int|A|^2$ is small over rescaled surfaces $\Gamma_{s_k}^k$ to be defined later. Combining this with the graphical assumption, we show that $\Gamma_{s_k}^k$ becomes arbitrarily close to a plane. Hence, its Gaussian density is at most 1, as desired.

\subsection{Isometric embedding}

In Wang's work, the ambient space is stationary, and he embeds $(M,G_0)\to\mathbb R^N$. Our ambient space evolves under K\"ahler--Ricci flow, so we instead embed the spacetime $\mathcal M:=M\times[0,\infty)$ into $\mathbb R^N$. To ensure a uniform bound on the second fundamental form of the embedding, we use the following compactification.

Let $\widehat{\mathcal M}=M\times[0,2]$ be the product manifold endowed with the metric $G_{(1-\tau)^{-1}-1}+d\tau^2$ for $\tau\in[0,1)$ and $G_\infty+d\tau^2$ for $\tau\in[1,2]$. Write $t(\tau)=(1-\tau)^{-1}-1$. For $\tau\in(0,1)$ we compute
\[
\frac{d^jt(\tau)}{d\tau^j}
=j!\,(1-\tau)^{-j-1}
=j!\,(t(\tau)+1)^{j+1},
\]
which is a polynomial in $t(\tau)$. Since $G_t\to G_\infty$ exponentially fast, the above metric on $\widehat{\mathcal M}$ is smooth. Nash's embedding theorem for compact manifolds with boundary gives an isometric embedding $\hat\iota:\widehat{\mathcal M}\to\mathbb R^N$ for some $N>0$.

Endow $\mathcal M=M\times[0,\infty)$ with the metric $G_t+\frac1{(t+1)^4}dt^2$. Then the embedding $\mathcal M\to\widehat{\mathcal M},\ (p,t)\mapsto(p,\frac t{t+1})$ is isometric. Composing this map with $\hat\iota$, we obtain an isometric embedding $\iota:\mathcal M\to\mathbb R^N$. Then, the norm $|A_{M\times\{t\},\mathbb R^N}|$ of the second fundamental form of $\iota(M\times\{t\})$ in $\mathbb R^N$ is uniformly bounded over $t\in[0,\infty)$. The mean curvature flow equation \eqref{equation:MCF} now reads
\[
\partial_t(\iota\circ F)
=\iota_*(H+\partial_t)
=\overline H+E,
\]
where $H\in TM/T\Gamma_t$ is the mean curvature vector of $\Gamma_t$ in $M$, $\overline H\in T\mathbb R^N/T\Gamma_t$ is the mean curvature vector of $\Gamma_t$ in $\mathbb R^N$,
\[
E
=\iota_*(\partial_t)-\sum_iA_{M\times\{t\},\mathbb R^N}(e_i,e_i),
\]
and $(e_i)$ is an orthonormal frame of $T\Gamma_t$. Compared to the one in Wang's work, the expression for $E$ here contains an additional $\iota_*(\partial_t)$ term, but $E$ is still orthogonal to $T\Gamma_t$ since $\iota$ is an isometry. Also,
\begin{equation}
\label{equation:E.bound}
|E|
\le|\iota_*(\partial_t)|+\sum_i|A_{M\times\{t\},\mathbb R^N}(e_i,e_i)|
\le\frac1{(t+1)^2}+C
\le C
\end{equation}
for all $t\in[0,\infty)$.

\subsection{Huisken's monotonicity formula}

Let $x_0\in\mathbb R^N$, $\epsilon\in(0,\frac T2)$, and $t_0\in[2\epsilon,\infty)$. For $x\in\mathbb R^N$ and $t<t_0$, let
\[
\rho_{x_0,t_0}(x,t)
=\frac1{4\pi(t_0-t)}e^{-\frac{|x-x_0|^2}{4(t_0-t)}}
\]
denote a backward heat kernel centered at $(x_0,t_0)$. We then have the following form of Huisken's monotonicity formula:
\begin{align}
\label{equation:Huisken}
\frac d{dt}\int_{\Gamma_t}\psi\rho_{x_0,t_0}
=
&\int_{\Gamma_t}(\partial_t\psi-\Delta\psi)\rho_{x_0,t_0}\\
\nonumber+
&\int_{\Gamma_t}\left(-\left|\overline H+\frac{(F-x_0)^\perp}{2\left(t_0-t\right)}+\frac E2\right|^2+\frac{|E|^2}4\right)\psi\rho_{x_0,t_0}
\end{align}
for any smooth function $\psi$ on $\Sigma\times(0,T)$; see, e.g., \cite{Wan01a}*{(5.7)} for a derivation. Such a formula was originally derived by Huisken \cite{Hui90} in Euclidean space and by White \cite{Whi97} for general ambient spaces.

First, we take $\psi=1$ in \eqref{equation:Huisken}. Recall from \eqref{equation:E.bound} that $|E|^2\le C$ for $t\in[0,T)$. It follows that for $t\in[0,\min(t_0,T))$,
\[
\frac d{dt}\int_{\Gamma_t}\rho_{x_0,t_0}
\le C\int_{\Gamma_t}\rho_{x_0,t_0},
\]
i.e., $e^{-Ct}\int_{\Gamma_t}\rho_{x_0,t_0}$ is non-increasing. Since $\int_{\Gamma_t}\rho_{x_0,t_0}\ge0$, we find that the Gaussian density $\Theta_{x_0,t_0}:=\lim_{t\to t_0^-}\int_{\Gamma_t}\rho_{x_0,t_0}\in[0,\infty)$ exists provided $t_0\le T$. Moreover, for $x\in\mathbb R^N$ and $t=t_0-2\epsilon$ we have
\[
\rho_{x_0,t_0}(x,t)
=\frac1{8\pi\epsilon}e^{-\frac{|x-x_0|^2}{8\epsilon}}
\le\frac1{8\pi\epsilon}.
\]
Combining this with the area bound in \eqref{equation:area.bound}, we obtain
\[
\int_{\Gamma_{t_0-2\epsilon}}\rho_{x_0,t_0}
\le C_\epsilon
\]
provided that $t_0-2\epsilon<T$. Since $e^{-Ct}\int_{\Gamma_t}\rho_{x_0,t_0}$ is non-increasing for $t\in[t_0-2\epsilon,\min(t_0,T))$, we obtain the following estimate of the Gaussian area.

\begin{lemma}
\label{lem:gaussian.area}
For every $\epsilon>0$, there exists a (time-independent) constant $C_\epsilon>0$ such that $\int_{\Gamma_t}\rho_{x_0,t_0}\le C_\epsilon$ for every $x_0\in\mathbb R^N$, $t_0\in[2\epsilon,T+\epsilon)$, and $t\in[t_0-2\epsilon,\min(t_0,T))$.
\end{lemma}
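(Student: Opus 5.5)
The plan is to combine the almost-monotonicity established just above with the area bound \eqref{equation:area.bound}. Fix $\epsilon>0$, $x_0\in\mathbb R^N$, and $t_0\in[2\epsilon,T+\epsilon)$. Take $\psi=1$ in \eqref{equation:Huisken} and use $|E|^2\le C$ from \eqref{equation:E.bound}. This gives
\[
\frac d{dt}\int_{\Gamma_t}\rho_{x_0,t_0}\le\frac C4\int_{\Gamma_t}\rho_{x_0,t_0}
\]
for $t<\min(t_0,T)$. Hence $e^{-Ct}\int_{\Gamma_t}\rho_{x_0,t_0}$ is non-increasing on $[t_0-2\epsilon,\min(t_0,T))$. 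Here $C$ is time-independent, because \eqref{equation:E.bound} holds uniformly for $t\in[0,\infty)$. The monotonicity formula only requires $t<t_0$, so it is still available when $t_0>T$, since then $\min(t_0,T)=T$.

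Next I check that the base time lies in the existence interval. From $t_0<T+\epsilon$ we get $t_0-2\epsilon<T-\epsilon<T$. From $t_0\ge2\epsilon$ we get $t_0-2\epsilon\ge0$. So $\Gamma_{t_0-2\epsilon}$ exists. At that time $\rho_{x_0,t_0}\le\frac1{8\pi\epsilon}$, and \eqref{equation:area.bound} gives
\[
\int_{\Gamma_{t_0-2\epsilon}}\rho_{x_0,t_0}\le\frac{C\,\mathrm{area}(\Gamma_0)}{8\pi\epsilon}.
\]

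Finally, for $t\in[t_0-2\epsilon,\min(t_0,T))$ the monotonicity gives
\[
\int_{\Gamma_t}\rho_{x_0,t_0}\le e^{C(t-(t_0-2\epsilon))}\int_{\Gamma_{t_0-2\epsilon}}\rho_{x_0,t_0}\le e^{2C\epsilon}\frac{C\,\mathrm{area}(\Gamma_0)}{8\pi\epsilon}=:C_\epsilon .
\]
The only point needing care is that the growth factor depends on the elapsed time $t-(t_0-2\epsilon)\le2\epsilon$, not on $t$ itself, so it is $e^{2C\epsilon}$. For this, the constant in the differential inequality must be uniform in time, which is exactly what the compactified embedding $\iota$ provides through \eqref{equation:E.bound}. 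The resulting $C_\epsilon$ is independent of $x_0$, $t_0$, and $t$.
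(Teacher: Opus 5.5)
Your proposal is correct and follows essentially the same route as the paper: almost-monotonicity of $e^{-Ct}\int_{\Gamma_t}\rho_{x_0,t_0}$ from \eqref{equation:Huisken} with $\psi=1$ and \eqref{equation:E.bound}, then the bound $\rho_{x_0,t_0}\le\frac1{8\pi\epsilon}$ at time $t_0-2\epsilon$ combined with \eqref{equation:area.bound}, then propagation forward over an interval of length at most $2\epsilon$. Your explicit checks that $t_0-2\epsilon\in[0,T)$ and that the growth factor is $e^{2C\epsilon}$ (independent of $t$) spell out what the paper leaves implicit.
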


Lemma~\ref{lem:gaussian.area} has the following standard corollary that will be used later.

\begin{corollary}
\label{corollary:area.growth}
For every $\epsilon>0$, there exists a constant $C_\epsilon>0$ such that $\mathrm{area}(\Gamma_t\cap B_r(x_0))\le C_\epsilon r^2$ for $x_0\in\mathbb R^N$, $t\in[2\epsilon,T)$, and $r\in(0,\sqrt\epsilon]$.
\end{corollary}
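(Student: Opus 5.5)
The plan is the standard comparison of Gaussian weight with area, via Lemma~\ref{lem:gaussian.area}. Fix $\epsilon>0$, $x_0\in\mathbb R^N$, $t\in[2\epsilon,T)$ and $r\in(0,\sqrt\epsilon]$. The idea is to choose the center time $t_0=t+r^2$. Then $t_0-t=r^2\le\epsilon$, so $t\in[t_0-2\epsilon,\min(t_0,T))$, and $t_0\ge t\ge2\epsilon$. Moreover $t_0<T+\epsilon$, since $t<T$ and $r^2\le\epsilon$. So Lemma~\ref{lem:gaussian.area} applies to this pair $(x_0,t_0)$ and gives $\int_{\Gamma_t}\rho_{x_0,t_0}\le C_\epsilon$.

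Next, bound the kernel from below on the ball. For $x\in B_r(x_0)$ we have $|x-x_0|^2\le r^2=t_0-t$, hence
\[
\rho_{x_0,t_0}(x,t)=\frac1{4\pi r^2}e^{-\frac{|x-x_0|^2}{4r^2}}\ge\frac{e^{-1/4}}{4\pi r^2}.
\]
Since $\rho>0$, we obtain
\[
\mathrm{area}(\Gamma_t\cap B_r(x_0))\le4\pi e^{1/4}r^2\int_{\Gamma_t\cap B_r(x_0)}\rho_{x_0,t_0}\le4\pi e^{1/4}C_\epsilon r^2.
\]
Here $\Gamma_t$ is viewed in $\mathbb R^N$ through $\iota$, and area in $\mathbb R^N$ agrees with area in $(M,G_t)$ because $\iota$ is isometric. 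Renaming the constant gives the claim.

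The only point needing care is the range check, namely that $t_0=t+r^2$ lies in $[2\epsilon,T+\epsilon)$ and that $t\ge t_0-2\epsilon$. Both follow from $r^2\le\epsilon$ and $t\ge2\epsilon$, so there is no real obstacle. If $T=\infty$, the condition $t_0<T+\epsilon$ holds trivially.
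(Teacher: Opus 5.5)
Your proposal is correct and follows the paper's proof exactly: center the Gaussian at $t_0=t+r^2$, use the lower bound $\rho_{x_0,t_0}\ge(4\pi r^2)^{-1}e^{-1/4}$ on $B_r(x_0)$, and apply Lemma~\ref{lem:gaussian.area}. Your explicit range check that $t_0\in[2\epsilon,T+\epsilon)$ and $t\in[t_0-2\epsilon,\min(t_0,T))$ is just the verification the paper leaves implicit.
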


\begin{proof}
Let $t_0=t+r^2\in(2\epsilon,T+\epsilon)$. Then,
\[
\rho_{x_0,t_0}(x,t)
=\frac1{4\pi r^2}e^{-\frac{|x-x_0|^2}{4r^2}}
\ge\frac1{4\pi r^2}e^{-\frac14}
\]
for $x\in B_r(x_0)$. It follows that
\[
\mathrm{area}(\Gamma_t\cap B_r(x_0))
\le4\pi r^2e^{\frac14}\int_{\Gamma_t}\rho_{x_0,t_0}.
\]
The corollary now follows directly from Lemma~\ref{lem:gaussian.area}.
\end{proof}

Now, assume that the maximal existence time $T<\infty$ and take $t_0=T$ and $\psi=2-\eta$ in \eqref{equation:Huisken}. Note that $1\le\psi\le2$. Using \eqref{equation:eta.ev2}, we find
\begin{align*}
\frac d{dt}\int_{\Gamma_t}(2-\eta)\rho_{x_0,T}
&\le-\int_{\Gamma_t}(\frac23|A|^2\eta+\frac{R_M}4\eta(1-\eta^2))\rho_{x_0,T}\\
&+C\int_{\Gamma_t}(2-\eta)\rho_{x_0,T}\\
&\le-\frac1{C_T}\int_{\Gamma_t}|A|^2\rho_{x_0,T}+C\int_{\Gamma_t}(2-\eta)\rho_{x_0,T},
\end{align*}
where we used the lower bound in \eqref{equation:eta.bound} to deduce the second inequality. Then, $e^{-Ct}\int_{\Gamma_t}(2-\eta)\rho_{x_0,T}\in[0,\infty)$ is non-increasing in $t\in(0,T)$, and its derivative is less than or equal to $-e^{-Ct}(C_T)^{-1}\int_{\Gamma_t}|A|^2\rho_{x_0,T}$. In particular,
\begin{equation}
\label{equation:A.wL2}
\int_{t
=T-\delta}^T\int_{\Gamma_t}|A|^2\rho_{x_0,T}\to0
\end{equation}
as $\delta\to0^+$. This control is key to our analysis.

To make use of \eqref{equation:A.wL2}, let $\lambda_k\to\infty$ be a sequence of positive numbers with $4\lambda_k^{-2}<T$. Let $\Gamma_s^k$ be the parabolic rescaling at $(x_0,T)$ in $\mathbb R^N\times\mathbb R$ of scale $\lambda_k$, with spacetime coordinates $y=\lambda_k(x-x_0)$ and $s=\lambda_k^2(t-T)$. Notice that we have scale invariance
\[
\int_{s
=-2}^{-1}\int_{\Gamma_s^k}|A|^2\rho_{0,0}
=\int_{\tau
=T-2\lambda_k^{-2}}^{T-\lambda_k^{-2}}\int_{\Gamma_\tau}|A|^2\rho_{x_0,T}.
\]
Thus, \eqref{equation:A.wL2} implies that
\[
\int_{s
=-2}^{-1}\int_{\Gamma_s^k}|A|^2\rho_{0,0}\to0\quad\text{and hence}\quad\int_{\Gamma_{s_k}^k}|A|^2\rho_{0,0}\to0
\]
as $k\to\infty$, where $s_k\in[-2,-1]$ is some sequence. Since $\rho_{0,0}$ has a uniform lower bound on $B_r(0)\times[-2,-1]$, we obtain:

\begin{lemma}
There exists a sequence  $s_k\in[-2,-1]$ such that
\begin{equation}
\label{equation:A.L2}
\int_{\Gamma_{s_k}^k\cap B_r(0)}|A|^2\to0
\end{equation}
as $k\to\infty$, for any $r>0$. By passing to a subsequence, we may assume $s_k\to s_\infty$ for some $s_\infty\in[-2,-1]$.
\end{lemma}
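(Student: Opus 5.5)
The plan is to derive the lemma from \eqref{equation:A.wL2}, using parabolic scale invariance, an averaging (mean-value) argument in $s$, and a diagonal choice to handle all radii $r$ at once.

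First, I will check the scale invariance stated just before the lemma. Under $y=\lambda_k(x-x_0)$ and $s=\lambda_k^2(t-T)$, the norm $|A|^2$ scales like $\lambda_k^{-2}$. The area element scales like $\lambda_k^{2}$, since $\Gamma$ is $2$-dimensional. The kernel satisfies $\rho_{0,0}(y,s)=\lambda_k^{-2}\rho_{x_0,T}(x,t)$, and $ds=\lambda_k^2\,dt$. Hence
\[
\int_{-2}^{-1}\int_{\Gamma_s^k}|A|^2\rho_{0,0}\,ds=\int_{T-2\lambda_k^{-2}}^{T-\lambda_k^{-2}}\int_{\Gamma_t}|A|^2\rho_{x_0,T}\,dt .
\]
The right side is at most the integral over $[T-\delta_k,T]$ with $\delta_k=2\lambda_k^{-2}\to0$. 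The integrand is nonnegative, so \eqref{equation:A.wL2} shows that $a_k:=\int_{-2}^{-1}\int_{\Gamma^k_s}|A|^2\rho_{0,0}\,ds\to0$.

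Next, I will choose the times $s_k$. The function $s\mapsto\int_{\Gamma^k_s}|A|^2\rho_{0,0}$ is continuous on $[-2,-1]$, because the flow is smooth for $t<T$. By the mean value theorem, there is $s_k\in[-2,-1]$ with $\int_{\Gamma^k_{s_k}}|A|^2\rho_{0,0}=a_k\to0$. This choice does not depend on $r$, which is what makes the conclusion hold for every $r>0$ simultaneously.

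Then, for fixed $r>0$, I use a uniform lower bound for the kernel. For $|y|<r$ and $s\in[-2,-1]$,
\[
\rho_{0,0}(y,s)=\frac{1}{4\pi(-s)}e^{-|y|^2/(4(-s))}\ge\frac1{8\pi}e^{-r^2/4}=:c_r>0 .
\]
Therefore $\int_{\Gamma^k_{s_k}\cap B_r(0)}|A|^2\le c_r^{-1}a_k\to0$.

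Finally, since $[-2,-1]$ is compact, Bolzano--Weierstrass gives a subsequence with $s_k\to s_\infty\in[-2,-1]$. Passing to this subsequence preserves the convergence \eqref{equation:A.L2} for every $r$.

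The only delicate point is the scaling bookkeeping, specifically that the weighted quantity is exactly invariant for $2$-dimensional surfaces. A further subtlety is that $s_k$ must be chosen from the weighted integral, not from the localized integral over $B_r(0)$, so that it works for all $r$ at once. The rest is routine.
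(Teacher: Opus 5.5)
Your proposal is correct and follows essentially the same route as the paper. Parabolic scale invariance turns \eqref{equation:A.wL2} into $\int_{-2}^{-1}\int_{\Gamma^k_s}|A|^2\rho_{0,0}\,ds\to0$; you then make a single $r$-independent choice of $s_k$ from the weighted integral, use the uniform lower bound $\rho_{0,0}\ge\frac{1}{8\pi}e^{-r^2/4}$ on $B_r(0)\times[-2,-1]$, and extract a convergent subsequence by compactness of $[-2,-1]$ (your scaling bookkeeping is right, and, as you end up noting, no diagonal argument is actually needed).
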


\subsection{Proof of Proposition~\ref{proposition:longtime}}

Assume $T<\infty$. Our goal is to show that $\Theta_{x_0,T}\le1$; from this we deduce that $(x_0,T)$ is a regular point by Section~4 in \cite{Whi05} and complete the proof.

Let $d_k$ denote the Euclidean distance between the origin and $\Gamma^k_{s_k}$.

\begin{lemma}
\label{lem:easy.case}
Suppose, after passing to a subsequence, that $d_k\to\infty$. Then, $\Theta_{x_0,T}=0<1$.
\end{lemma}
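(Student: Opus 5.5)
The plan is to exploit the monotonicity of $e^{-Ct}\int_{\Gamma_t}\rho_{x_0,T}$ together with scale invariance. Then we show that the Gaussian integral at the time $s_k$ in the rescaled picture tends to zero when the rescaled surfaces run off to infinity. First I would rewrite the Gaussian density in rescaled coordinates. By parabolic scale invariance of $\rho$,
\[
\int_{\Gamma^k_{s_k}}\rho_{0,0}(\cdot,s_k)=\int_{\Gamma_{t_k}}\rho_{x_0,T}(\cdot,t_k),\qquad t_k=T+\lambda_k^{-2}s_k\to T^-.
\]
Since $e^{-Ct}\int_{\Gamma_t}\rho_{x_0,T}$ is non-increasing and has limit $\Theta_{x_0,T}$ as $t\to T^-$, it follows that
\[
\Theta_{x_0,T}=\lim_{k\to\infty}\int_{\Gamma^k_{s_k}}\rho_{0,0}(\cdot,s_k).
\]
So it suffices to show that the right-hand side tends to $0$ when $d_k\to\infty$.

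Next, I would bound the rescaled Gaussian integral using the area growth estimate. Corollary~\ref{corollary:area.growth} gives $\mathrm{area}(\Gamma_t\cap B_r(x))\le C r^2$ for $r\le\sqrt\epsilon$ and $t\ge 2\epsilon$. Every ball has radius at most $\sqrt\epsilon$ after subdividing, and area growth is scale invariant under the rescaling, since both sides scale by $\lambda_k^2$. Hence $\Gamma^k_{s_k}$ satisfies $\mathrm{area}(\Gamma^k_{s_k}\cap B_R(y))\le CR^2$ for all $R\le\lambda_k\sqrt\epsilon$. For larger $R$, I would cover $B_R$ by balls of radius $\lambda_k\sqrt\epsilon$, or simply note that the total area is $\lambda_k^2\,\mathrm{area}(\Gamma_{t_k})\le C\lambda_k^2$. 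This still yields polynomial growth $\mathrm{area}(\Gamma^k_{s_k}\cap B_R(0))\le C R^2$ for $R\le\lambda_k\sqrt\epsilon$, and at most $C\lambda_k^2$ beyond that.

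Then I would split the integral into dyadic annuli. On $\Gamma^k_{s_k}$ we have $|y|\ge d_k$, and $s_k\in[-2,-1]$ gives $\rho_{0,0}(y,s_k)\le \frac1{4\pi}e^{-|y|^2/8}$. Summing over the annuli $\{2^jd_k\le|y|<2^{j+1}d_k\}$ gives
\[
\int_{\Gamma^k_{s_k}}\rho_{0,0}\le C\sum_{j\ge0}(2^{j+1}d_k)^2e^{-4^jd_k^2/8},
\]
for those annuli with radius below $\lambda_k\sqrt\epsilon$. The remaining far region contributes at most $C\lambda_k^2e^{-\lambda_k^2\epsilon/8}\to0$. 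As $d_k\to\infty$ the sum tends to $0$, so $\Theta_{x_0,T}=0$.

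The main obstacle I expect is the bookkeeping in the area growth step. One must check that the corollary's restriction $r\le\sqrt\epsilon$ and the requirement $t_k\ge2\epsilon$ are harmless. Both hold for large $k$, since $t_k\to T>0$ and we may fix $\epsilon<T/4$. The far region must be handled by the global area bound \eqref{equation:area.bound}, as above.
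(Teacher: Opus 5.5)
Your proof is correct, but it takes a different route from the paper's.

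\textbf{The paper's route.} The paper never passes through area growth or dyadic sums. It uses the pointwise heat-kernel comparison
\[
\rho_{0,0}(y,s)\le 2e^{-r^2/16}\rho_{0,|s|}(y,s)\quad\text{for }|y|\ge r,\ s\in[-2,-1],
\]
and then bounds $\int_{\Gamma^k_{s_k}}\rho_{0,|s_k|}$ directly by Lemma~\ref{lem:gaussian.area}. This is exactly why that lemma allows centers $t_0$ slightly after $T$, up to $T+\epsilon$. Taking $r=d_k$ then finishes the proof in one line.

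\textbf{Your route.} You go through Corollary~\ref{corollary:area.growth}, which is itself derived from Lemma~\ref{lem:gaussian.area}. You then re-sum the Gaussian over dyadic annuli, and use the global bound \eqref{equation:area.bound} for scales beyond $\lambda_k\sqrt\epsilon$.

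\textbf{What each buys.} The paper's trick is brief and yields a clean, $k$-uniform tail estimate \eqref{equation:density.bound}, which is reused verbatim later to show $\Theta_{x_0,T}\le1$. Your argument is more hands-on. It makes explicit that local quadratic area growth up to scale $\lambda_k\sqrt\epsilon$, together with finite total area, is all that is needed. Total area alone would not suffice, since $\lambda_k^2e^{-d_k^2/8}$ need not tend to zero when $d_k\to\infty$ merely.

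\textbf{A harmless bookkeeping slip.} The last admissible annulus has outer radius $2^{j+1}d_k\in(\lambda_k\sqrt\epsilon/2,\lambda_k\sqrt\epsilon]$. So the far region is only guaranteed to lie in $\{|y|\ge\lambda_k\sqrt\epsilon/2\}$, and its contribution is bounded by $C\lambda_k^2e^{-\lambda_k^2\epsilon/32}$ rather than with exponent $\lambda_k^2\epsilon/8$. The same bound also covers the case $2d_k>\lambda_k\sqrt\epsilon$, where there are no near annuli at all. Either way it tends to zero, so the conclusion $\Theta_{x_0,T}=0$ stands.
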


\begin{proof}
For any $r>0$, $|y|\ge r$, and $s\in[-2,-1]$, we observe that
\[
\rho_{0,0}(y,s)
=\frac1{4\pi|s|}e^{-\frac{|y|^2}{4|s|}}
\le\frac1{4\pi|s|}e^{-\frac{r^2}{16}}e^{-\frac{|y|^2}{8|s|}}
=2e^{-\frac{r^2}{16}}\rho_{0,|s|}(y,s);
\]
hence,
\begin{equation}
\label{equation:density.bound}
\int_{\Gamma_{s_k}^k\setminus B_r(0)}\rho_{0,0}
\le2e^{\frac{-r^2}{16}}\int_{\Gamma_{s_k}^k\setminus B_r(0)}\rho_{0,|s_k|}
\le2e^{\frac{-r^2}{16}}\int_{\Gamma_{s_k}^k}\rho_{0,|s_k|}
\le Ce^{\frac{-r^2}{16}},
\end{equation}
where in the last inequality we used Lemma~\ref{lem:gaussian.area}. Taking $r=d_k\to\infty$ and noting that $\int_{\Gamma_{s_k}^k}\rho_{0,0}=\int_{\Gamma_{s_k}^k\setminus B_{d_k}(0)}\rho_{0,0}$ by the definition of distance, we then obtain $\Theta_{x_0,T}=\lim_{k\to\infty}\int_{\Gamma_{s_k}^k}\rho_{0,0}=0$.
\end{proof}

By Lemma~\ref{lem:easy.case}, we may assume $d_k\le C$ for a uniform constant $C$ throughout the remainder of the section. In particular, $(x_0,T)$ is a limit point of $\iota(\cup_t\Gamma_t\times\{t\})$, and we have $x_0=\iota((p,\tilde p),T)$ for some $p\in\Sigma$ and $\tilde p\in\tilde\Sigma$.

Recall from Lemma~\ref{lem:eta}\eqref{lem:eta..} that $\Gamma_t$ is the graph of an area-preserving map $f_t:(\Sigma,g_t)\to(\tilde\Sigma,\tilde g_t)$ with operator norm $|df_t|_{\mathrm{op}}\le C_T$ on $\Sigma\times[0,T)$ for some $C_T>0$. In particular, $f_t$ is uniformly Lipschitz for $t\in[0,T)$. The condition $d_k\le C$ implies that there exists a sequence $p_k\in\Sigma$ satisfying
\begin{equation}
\label{equation:dist}
\operatorname{dist}_{G_{t_k}}((p_k,f_{t_k}(p_k)),(p,\tilde p))
\le C\lambda_k^{-1},
\end{equation}
where $t_k=T+\lambda_k^{-2}s_k$.

Take $U$ and $\tilde U$ to be open geodesic balls about $p$ and $\tilde p$ in $(\Sigma,g_T)$ and $(\tilde\Sigma,\tilde g_T)$. We may assume $f_{t_k}(U)\subset\tilde U$ since $p_k\to p$, $f_{t_k}(p_k)\to\tilde p$ and $f_{t_k}$ has a uniform Lipschitz constant. Let $\phi:\mathcal B_{r_U}(0)\to U$ and $\tilde\phi:\mathcal B_{r_{\tilde U}}(0)\to\tilde U$ be the geodesic charts, where $\mathcal B_r(0)$ denotes an open ball in $\mathbb R^2$ of radius $r$. We may also assume that the Euclidean metric is uniformly equivalent to $\phi^*g_{t_k}$ and $\tilde\phi^*\tilde g_{t_k}$ on $\mathcal B_{r_U}(0)$ and $\mathcal B_{r_{\tilde U}}(0)$. For clarity, we use $B$ for balls in $\mathbb R^N$, $\mathcal B$ for balls in $\mathbb R^2$, and $D$ for classical derivatives on $\mathcal B$. Also, we use ${|\cdot|}_{\mathrm{Euc}}$ for norms computed using the Euclidean metric and ${|\cdot|}$ without a subscript for norms associated with $G_t$; these two norms will be uniformly equivalent in the region under consideration. When integrating over $\mathcal B$, we always use the Euclidean area form.

Set $\phi_k=\phi(\lambda_k^{-1}\,\cdot\,)$ on $\mathcal B_{\lambda_kr_U}(0)$ and $\tilde\phi_k=\tilde\phi(\lambda_k^{-1}\,\cdot\,)$ on $\mathcal B_{\lambda_kr_{\tilde U}}(0)$. Let $g^k=\lambda_k^2\phi_k^*g_{t_k}$ and $\tilde g^k=\lambda_k^2\tilde\phi_k^*\tilde g_{t_k}$. Then $\phi_k$ and $\tilde\phi_k$ are isometries onto $(U,\lambda_k^2g_{t_k})$ and $(\tilde U,\lambda_k^2\tilde g_{t_k})$.

Define $u_k=\tilde\phi_k^{-1}\circ f_{t_k}\circ\phi_k:\mathcal B_{\lambda_kr_U}(0)\to\mathcal B_{\lambda_kr_{\tilde U}}(0)$. Next, define
\[
\iota_k:\mathcal B_{\lambda_kr_U}(0)\to\mathbb R^N,\quad z\mapsto\lambda_k\left(\iota((\phi_k(z),\tilde\phi_k(u_k(z))),t_k)-x_0\right),
\]
which parametrizes an open subset of $\Gamma^k_{s_k}$. Note that $\iota_k^*g_{\mathrm{Euc}}=(\mathrm{id}\times u_k)^*(g^k\oplus\tilde g^k)$ on $\mathcal B_{\lambda_kr_U}(0)$.

\begin{lemma}
\label{lem:uk}
We have $|Du_k|_{\mathrm{Euc}}\le C$ and $|u_k(0)|_{\mathrm{Euc}}\le C$. After passing to a subsequence, we have $u_k\to u_\infty$ in $C^\alpha_{\mathrm{loc}}(\mathbb R^2)$ for some function $u_\infty$.
\end{lemma}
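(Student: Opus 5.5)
The plan is to obtain three separate facts: a uniform gradient bound for $u_k$ from Lemma~\ref{lem:eta}\eqref{lem:eta..}, a bound on $u_k(0)$ from \eqref{equation:dist}, and then compactness by Arzel\`a--Ascoli.

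\emph{Gradient bound.} By Lemma~\ref{lem:eta}\eqref{lem:eta..}, $|df_t|_{\mathrm{op}}\le C_T$ with respect to $g_t,\tilde g_t$ for all $t\in[0,T)$. This bound is invariant under the simultaneous rescaling $g_{t_k}\mapsto\lambda_k^2g_{t_k}$ and $\tilde g_{t_k}\mapsto\lambda_k^2\tilde g_{t_k}$. Since $\phi_k$ and $\tilde\phi_k$ are isometries onto $(U,\lambda_k^2g_{t_k})$ and $(\tilde U,\lambda_k^2\tilde g_{t_k})$, we get $|Du_k|_{\mathrm{op}}\le C_T$ measured with respect to $g^k$ and $\tilde g^k$. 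We have $g^k(z)=\phi^*g_{t_k}(\lambda_k^{-1}z)$, and by the choice of $U,\tilde U$ this is uniformly equivalent to the Euclidean metric on $\mathcal B_{\lambda_kr_U}(0)$, uniformly in $k$; the same holds for $\tilde g^k$. Hence $|Du_k|_{\mathrm{Euc}}\le C$.

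\emph{Bound at the origin.} The centers $\phi(0)=p$ and $\tilde\phi(0)=\tilde p$ come from the geodesic charts. Write $z_k=\lambda_k\phi^{-1}(p_k)$. By \eqref{equation:dist} and the uniform equivalence of $G_{t_k}$ with $G_T$ near $(p,\tilde p)$, we have $\operatorname{dist}_{g_{t_k}}(p_k,p)\le C\lambda_k^{-1}$ and $\operatorname{dist}_{\tilde g_{t_k}}(f_{t_k}(p_k),\tilde p)\le C\lambda_k^{-1}$. By the metric equivalence in the charts this gives $|z_k|_{\mathrm{Euc}}\le C$ and $|u_k(z_k)|_{\mathrm{Euc}}=\lambda_k|\tilde\phi^{-1}(f_{t_k}(p_k))|_{\mathrm{Euc}}\le C$. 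The gradient bound then yields $|u_k(0)|_{\mathrm{Euc}}\le|u_k(z_k)|_{\mathrm{Euc}}+C|z_k|_{\mathrm{Euc}}\le C$. The step requiring the most care is this one: one must make sure that $\lambda_k$ times the chart distance is comparable to the rescaled Riemannian distance, uniformly in $k$. This follows because $g_{t_k}\to g_T$ smoothly, so the $g_{t_k}$- and $g_T$-distances, and hence the chart distances, are uniformly comparable on $U$.

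\emph{Compactness.} The maps $u_k$ are now uniformly Lipschitz, with $|u_k(0)|\le C$, on domains $\mathcal B_{\lambda_kr_U}(0)$ exhausting $\mathbb R^2$. On each fixed ball $\mathcal B_R(0)$ they are therefore uniformly bounded and equicontinuous. Arzel\`a--Ascoli, combined with a diagonal argument over $R\to\infty$, gives a subsequence converging locally uniformly to a Lipschitz map $u_\infty:\mathbb R^2\to\mathbb R^2$. Finally, interpolate between the uniform Lipschitz bound and the $C^0$ convergence: for $\alpha<1$,
\[
[u_k-u_\infty]_{C^\alpha(\mathcal B_R)}\le C\|u_k-u_\infty\|_{C^0(\mathcal B_R)}^{1-\alpha}\to0 .
\]
This upgrades the convergence to $C^\alpha_{\mathrm{loc}}(\mathbb R^2)$.
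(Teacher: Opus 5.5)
Your proof is correct and follows the paper's argument. The gradient bound comes from $|du_k|_{\mathrm{op}}=|df_{t_k}|_{\mathrm{op}}\le C$ together with the uniform equivalence of the rescaled metrics with the Euclidean one. The bound on $u_k(0)$ comes from \eqref{equation:dist} and the Lipschitz bound. The subsequential limit comes from Arzel\`a--Ascoli. You only make explicit what the paper leaves implicit: the comparison of chart distances with $g_{t_k}$-distances, and the interpolation $[w]_{C^\alpha}\le C\|w\|_{C^0}^{1-\alpha}$ that upgrades locally uniform convergence to $C^\alpha_{\mathrm{loc}}$ convergence.
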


\begin{proof}
The first two estimates come from $|du_k|_{\mathrm{op}}=|df_{t_k}|_{\mathrm{op}}\le C$, \eqref{equation:dist}, and the uniform Lipschitz bound for $f_{t_k}$. Then, the subsequential convergence is a direct consequence of the Arzel\`a--Ascoli theorem.
\end{proof}

\begin{lemma}
\label{lem:D2uk}
For every $r>0$,
\begin{equation}
\label{equation:D2u.L2}
\int_{\mathcal B_r(0)}|D^2u_k|_{\mathrm{Euc}}^2\,d\mathrm{area_{Euc}}\to0
\end{equation}
as $k\to\infty$.
\end{lemma}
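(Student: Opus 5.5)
The plan is to compare the second fundamental form of the rescaled surface $\Gamma^k_{s_k}$, computed with respect to the rescaled ambient metric, with the classical Hessian $D^2u_k$ of the graphing function. We then transfer the smallness \eqref{equation:A.L2} to $D^2u_k$.

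\emph{Choice of immersion.} Rather than work in $\mathbb R^N$, I would use the intrinsic picture. The map $\mathrm{id}\times u_k:\mathcal B_{\lambda_kr_U}(0)\to(\mathcal B,g^k)\times(\mathcal B,\tilde g^k)$ parametrizes the rescaled $\Gamma_{t_k}$, which is a surface in $(M,\lambda_k^2G_{t_k})$. This ambient space embeds in $\mathbb R^N$ via the rescaled $\iota$ restricted to the slice $M\times\{t_k\}$. The second fundamental form of $\Gamma^k_{s_k}$ in $\mathbb R^N$ splits orthogonally into its component in $M$, which is $A$ (rescaled), and the slice's second fundamental form $A_{M\times\{t\},\mathbb R^N}$ applied to tangent vectors. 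Since $|A|^2\le|A_{\mathbb R^N}|^2$, we may read \eqref{equation:A.L2} either way. The cleanest reading is that the relevant $|A|$ is the one in $M$, rescaled; its square integral over $B_r(0)$ tends to $0$.

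\emph{Pointwise comparison.} In the coordinates $z$ on $\mathcal B$, with $g^k_{ij}$ and $\tilde g^k$ having Christoffel symbols $\Gamma^k,\tilde\Gamma^k$, the second fundamental form of a graph $(z,u(z))$ in a product is the normal projection of
\[
\bigl(\Gamma^k(\partial_i,\partial_j),\ \partial_i\partial_ju^a+\tilde\Gamma^{k,a}_{bc}(u)\partial_iu^b\partial_ju^c\bigr)-d(\mathrm{id}\times u)\bigl(\Gamma^{\mathrm{ind}}_{ij}\bigr).
\]
Three facts feed into this. First, $|Du_k|\le C$ by Lemma~\ref{lem:uk}. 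Second, $g^k,\tilde g^k$ are uniformly equivalent to Euclidean metrics. Third, the rescaled Christoffel symbols satisfy $|\Gamma^k|,|\tilde\Gamma^k|\le C\lambda_k^{-1}$, because rescaling $\phi$ by $\lambda_k^{-1}$ makes $g^k\to$ Euclidean in $C^1_{\mathrm{loc}}$. Together these give
\[
|D^2u_k|_{\mathrm{Euc}}\le C|A|+C\lambda_k^{-1}.
\]
The key point is that the normal projection does not lose the Hessian. The vector $(0,D^2_{ij}u)$ is normal up to the tangent correction, and for a graph with bounded slope the normal projection restricted to vertical vectors $(0,w)$ is injective with a uniform lower bound: $|\pi^\perp(0,w)|\ge c|w|$, where $c$ depends only on $|Du|\le C$. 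The tangential terms are absorbed by the projection, so $|\pi^\perp(0,D^2_{ij}u)|\le|A|+C\lambda_k^{-1}$.

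\emph{Integration.} Square the pointwise bound and integrate over $\mathcal B_r(0)$. The Euclidean area form is comparable to the induced area form, with constants controlled by $|Du_k|\le C$. The image of $\mathcal B_r(0)$ under $\iota_k$ lies in $B_{Cr+C}(0)\subset\mathbb R^N$, since $|u_k(0)|\le C$ and $\iota_k$ is Lipschitz. Therefore
\[
\int_{\mathcal B_r}|D^2u_k|^2\le C\int_{\Gamma^k_{s_k}\cap B_{C(r+1)}(0)}|A|^2+C\lambda_k^{-2}r^2\to0
\]
by \eqref{equation:A.L2}.

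\emph{Main obstacle.} The hardest step is the pointwise comparison. It requires checking that the Christoffel terms really decay like $\lambda_k^{-1}$ under blow-up, and that $A$ in \eqref{equation:A.L2}, defined via $G_t$ and rescaled, matches the quantity that controls $D^2u_k$. Scaling $|A|^2\,d\mathrm{area}$ is invariant in dimension two, so the rescaled quantities agree.
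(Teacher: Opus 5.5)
Your proposal is correct and follows essentially the same route as the paper. You bound the Christoffel symbols of the rescaled metrics and compare $D^2u_k$ pointwise with $|A|$, using that normal projection is uniformly injective on vertical vectors for graphs of bounded slope. The paper expresses the same comparison as $|A|\le|\nabla du_k|\le(1+|du_k|_{\mathrm{op}}^2)^{3/2}|A|$ for the covariant Hessian. You then integrate using area comparability, $\iota_k(\mathcal B_r(0))\subset B_{C(1+r)}(0)$, and \eqref{equation:A.L2}.

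Your Christoffel bound $C\lambda_k^{-1}$ comes from pure scaling rather than from the geodesic charts, which give the paper's $C_r\lambda_k^{-2}$. It is still sufficient, since it only contributes $Cr^2\lambda_k^{-2}\to0$.
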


\begin{proof}
Since $|u_k(0)|_{\mathrm{Euc}}\le C$ and $|Du_k|_{\mathrm{Euc}}\le C$, we have $|u_k|_{\mathrm{Euc}}\le C(1+r)$ on $\mathcal B_r(0)$. Since $\phi$ and $\tilde\phi$ are geodesic charts for $g_T$ and $\tilde g_T$, the Christoffel symbols of $g^k$ and $\tilde g^k$ are bounded by $C_r\lambda_k^{-2}$ on $\mathcal B_{C(1+r)}(0)$ for all sufficiently large $k$ for every $r>0$. This, together with the bound $|Du_k|_{\mathrm{Euc}}\le C$, implies
\[
|D^2u_k-\nabla du_k|_{\mathrm{Euc}}
\le C_r\lambda_k^{-2}
\]
on $\mathcal B_r(0)$, where $\nabla du_k$ denotes the Hessian of $u_k$ regarded as a map from $(\mathcal B_{\lambda_kr_U}(0),g^k)$ to $(\mathcal B_{\lambda_kr_{\tilde U}}(0),\tilde g^k)$. Consequently,
\[
|D^2u_k|_{\mathrm{Euc}}^2
\le(|\nabla du_k|_{\mathrm{Euc}}+C_r\lambda_k^{-2})^2
\le2(|\nabla du_k|_{\mathrm{Euc}}^2+C_r\lambda_k^{-4})
\]
and
\begin{equation*}
\int_{\mathcal B_r(0)}|D^2u_k|_{\mathrm{Euc}}^2\,d\mathrm{area_{Euc}}
\le2\int_{\mathcal B_r(0)}|\nabla du_k|_{\mathrm{Euc}}^2\,d\mathrm{area_{Euc}}+C_r\lambda_k^{-4}.
\end{equation*}
To prove \eqref{equation:D2u.L2}, it remains to show the first term on the right decays to zero.

Indeed, we have a general inequality
\begin{equation}
\label{equation:D2u.vs.A}
|A|
\le|\nabla du_k|
\le(1+|du_k|_{\mathrm{op}}^2)^{3/2}|A|
\end{equation}
that holds pointwise along the graph of $u_k$, where $|A|$ denotes the norm of the second fundamental form of $\Gamma^k_{s_k}$ in $\lambda_k(\iota(M\times\{t_k\})-x_0)$. Then,
\[
\int_{\mathcal B_r(0)}|\nabla du_k|_{\mathrm{Euc}}^2\,d\mathrm{area_{Euc}}
\le C\int_{\iota_k(\mathcal B_r(0))}|A|^2
\]
since $|du_k|_{\mathrm{op}}\le C$, $g^k$ and $\tilde g^k$ are uniformly equivalent to the Euclidean metric, and the area element of the graph of $u_k$ dominates that of $g^k$. Note that $\iota_k(\mathcal B_r(0))\subset\Gamma_{s_k}^k\cap B_{C(1+r)}(0)$. By \eqref{equation:A.L2}, the right-hand side converges to zero. This completes the proof of Lemma~\ref{lem:D2uk}.
\end{proof}

Define $v_{j,k}=\partial_{z_j}u_k$, so $|v_{j,k}|_{\mathrm{Euc}}\le C$ by Lemma~\ref{lem:uk}. By Poincar\'e's inequality, for every $r>0$,
\begin{equation}
\label{equation:Poincare}
\|v_{j,k}-a_{j,k,r}\|_{L^2(\mathcal B_r(0))}
\le C_r\|Dv_{j,k}\|_{L^2(\mathcal B_r(0))}\to0
\end{equation}
as $k\to\infty$, where the norms are computed with respect to the Euclidean metrics and areas, and
\[
a_{j,k,r}
=\frac1{\mathrm{vol}(\mathcal B_r(0))}\int_{\mathcal B_r(0)}v_{j,k}\,d\mathrm{area_{Euc}},\quad|a_{j,k,r}|
\le C.
\]
By \eqref{equation:Poincare}, after passing to a subsequence, we obtain $a_{j,k,r}\to a_j$ for some constant vectors $a_j$ independent of $r$ as $k\to\infty$, for every $r>0$.

The convergence $Du_k=(v_{1,k},v_{2,k})\to(a_1,a_2)$ in $L^2_{\mathrm{loc}}$ and the convergence $u_k\to u_\infty$ in $C^\alpha_{\mathrm{loc}}$ together imply that $u_\infty$ is an affine function. It remains to bound $\Theta_{x_0,T}$.

Let $L_1,L_2:\mathbb R^2\to\mathbb R^N$ denote the linear maps
\[
L_1
=d\iota|_{((p,\tilde p),T)}\circ d\phi|_0,\quad L_2
=d\iota|_{((p,\tilde p),T)}\circ d\tilde\phi|_0.
\]
Since $\iota$ is an isometry and the metric of $\mathcal M$ splits orthogonally, we have $L_1^TL_1=L_2^TL_2=I_{2\times2}$ and $L_1^TL_2=0$. Since the time variable is rescaled by $\lambda_k^{-2}$, Taylor's theorem and $|Du_k|_{\mathrm{Euc}}\le C$ give
\begin{equation}
\label{equation:iota}
\iota_k
=L_1+L_2u_k+o_{k\to\infty}(1),\quad D\iota_k
=L_1+L_2Du_k+o_{k\to\infty}(1),
\end{equation}
where the $o_{k\to\infty}(1)$ term is uniform on $\mathcal B_r(0)$ for every $r>0$. Combining \eqref{equation:iota} with Lemma~\ref{lem:uk} and Lemma~\ref{lem:D2uk}, we see that $\iota_k$ converges to an injection $\iota_\infty:=L_1+L_2u_\infty$ uniformly on $\mathcal B_r(0)$, and that the Jacobian $\mathcal J_k=\sqrt{\det((D\iota_k)^T(D\iota_k))}$ of $\iota_k$ converges to $\mathcal J_\infty=\sqrt{\det(I_{2\times2}+a^Ta)}$ in $L^1(\mathcal B_r(0))$ since the map $\xi\mapsto\sqrt{\det(I_{2\times2}+\xi^T\xi)}$ is Lipschitz on bounded sets of $M_{N\times2}(\mathbb R)$. Also, $\Gamma^k_{s_k}\cap B_r(0)\subset\iota_k(\mathcal B_{Cr}(0))$ for all $k$ sufficiently large. By the area formula and the uniform continuity of $\rho_{0,0}$ on $\mathbb R^N\times[-2,-1]$,
\begin{align*}
\limsup_{k\to\infty}\int_{\Gamma^k_{s_k}\cap B_r(0)}\rho_{0,0}
&\le\lim_{k\to\infty}\int_{\mathcal B_{Cr}(0)}\rho_{0,0}(\iota_k,s_k)\mathcal J_k\,d\mathrm{area_{Euc}}\\
&=\int_{\mathcal B_{Cr}(0)}\rho_{0,0}(\iota_\infty,s_\infty)\mathcal J_\infty \,d\mathrm{area_{Euc}},
\end{align*}
and the last integral is bounded by
\[
\int_{\iota_\infty(\mathbb R^2)}\rho_{0,0}(\,\cdot\,,s_\infty)
=e^{-\frac{\operatorname{dist}(0,\iota_\infty(\mathbb R^2))^2}{4|s_\infty|}}
\le1.
\]
With the tail estimate $\int_{\Gamma^k_{s_k}\setminus B_r(0)}\rho_{0,0}\le Ce^{-r^2/16}$ as in \eqref{equation:density.bound}, we obtain $\Theta_{x_0,T}\le1$ by letting $r\to\infty$. This completes the proof of Proposition~\ref{proposition:longtime}.

\section{Integral estimate for $|H|^2/\eta$}
\label{section:integral}

Following \cite{Wan08}, we derive a useful integral estimate. Let
\begin{equation}
\label{equation:I}
I(t)
=\int_{\Gamma_t}\frac{|H|^2}\eta
\ge\int_{\Gamma_t}|H|^2,
\end{equation}
with the second inequality following from $0<\eta\le1$. We compute
\begin{align*}
(\partial_t-\Delta)\frac{|H|^2}\eta
&=\frac{\eta(\partial_t-\Delta)|H|^2-|H|^2(\partial_t-\Delta)\eta}{\eta^2}\\
&+2\frac{\eta\nabla|H|^2\cdot\nabla\eta-|H|^2|\nabla\eta|^2}{\eta^3}.
\end{align*}
Plugging in \eqref{equation:eta.evolution} and \eqref{equation:H.evolution3} and rearranging terms, we obtain
\begin{align*}
(\partial_t-\Delta)\frac{|H|^2}\eta
&=\frac{(\kappa+(\kappa-\frac{R_M}4)(1-\eta^2))|H|^2+|H|^4+\mathcal E_1}\eta\\
&+2\frac{|H^\alpha h_{ij}^\alpha|^2-|H|^2|A|^2}\eta\\
&+2\frac{\eta\nabla|H|^2\cdot\nabla\eta-|H|^2|\nabla\eta|^2-\eta^2|\nabla H|^2}{\eta^3}.
\end{align*}
We now show that the last two terms are non-positive. Indeed, by the Cauchy--Schwarz inequality, $|H^\alpha h_{ij}^\alpha|^2-|H|^2|A|^2\le0$. Meanwhile,
\[
\eta\nabla|H|^2\cdot\nabla\eta-|H|^2|\nabla\eta|^2-\eta^2|\nabla|H||^2
=-\bigl||H|\nabla\eta-\eta\nabla|H|\bigr|^2,
\]
and $|\nabla H|\ge|\nabla|H||$. Thus,
\[
\eta\nabla|H|^2\cdot\nabla\eta-|H|^2|\nabla\eta|^2-\eta^2|\nabla H|^2
\le0.
\]
Therefore,
\[
(\partial_t-\Delta)\frac{|H|^2}\eta
\le\frac{(\kappa+(\kappa-\frac{R_M}4)(1-\eta^2))|H|^2+|H|^4+\mathcal E_1}\eta.
\]
Integrating this inequality and recalling \eqref{equation:area.evolution}, we find
\begin{equation}
\label{equation:I.evolution}
\frac d{dt}I(t)
\le\kappa I(t)+\int_{\Gamma_t}\frac{\mathcal E_2}\eta,
\end{equation}
where $\mathcal E_2=\mathcal E_1+((\kappa-\frac{R_M}4)(1-\eta^2)+(-\frac12G^{ij}\operatorname{Ric}_{ij}+\kappa))|H|^2$ and
\begin{equation}
\label{equation:E2.bound}
|\mathcal E_2|
\le Ce^{-\mu t}|H|(1+|A|).
\end{equation}
by \eqref{equation:g.convergence} and \eqref{ineq:E1}. Inequality \eqref{equation:I.evolution} is especially useful when $\kappa\le0$. Indeed, when $\kappa=0$, we know that $\eta\ge C^{-1}>0$ and so
\[
\frac d{dt}I(t)
\le Ce^{-\mu t}\int_{\Gamma_t}(1+|A|^2)
\le c_2e^{-\mu t}(I(t)+1),
\]
for some $c_2>0$, with the last inequality following from \eqref{ineq:A2}, \eqref{equation:I}, and the area bound \eqref{equation:area.bound}. This implies that
\begin{equation}
\label{equation:log.I}
\log(I(t)+1)+\frac{c_2}\mu e^{-\mu t}
\end{equation}
is non-increasing, in the case $\kappa=0$.

In the remainder of the section, we assume $\kappa<0$. First, recall from \cite{Ham88} that one may choose $\mu=|\kappa|=-\kappa$ in this case; note that this $\mu$ is half of the exponential decay rate in \cite{Ham88}, as our equation \eqref{equation:RF} differs from the one there. Then, by \eqref{equation:E2.bound} and the Cauchy--Schwarz inequality,
\[
\int_{\Gamma_t}\frac{\mathcal E_2}\eta
\le Ce^{\kappa t}I(t)^{\frac12}\left(\int_{\Gamma_t}\frac{1+|A|^2}\eta\right)^{\frac12}.
\]
By the lower bound for $\eta$ in Lemma~\ref{lem:eta}\eqref{lem:eta.<0} and the $L^2$ bound \eqref{ineq:A2} for $|A|$,
\begin{align*}
\int_{\Gamma_t}\frac{\mathcal E_2}\eta
&\le Ce^{\kappa t}\left(\inf_{\Gamma_t}\eta\right)^{-\frac12}I(t)^{\frac12}\left(\int_{\Gamma_t}(1+|A|^2)\right)^{\frac12}\\
&\le Ce^{\frac{\kappa t}2}I(t)^{\frac12}\left(\int_{\Gamma_t}(1+|H|^2)\right)^{\frac12}.
\end{align*}
Using the area bound \eqref{equation:area.bound} and the inequality $I(t)\ge\int|H|^2$ in \eqref{equation:I}, we find
\[
\int_{\Gamma_t}\frac{\mathcal E_2}\eta
\le Ce^{\frac{\kappa t}2}(I(t)^{\frac12}+I(t)).
\]
Combining this with \eqref{equation:I.evolution}, we obtain
\begin{equation}
\label{equation:I.evolution;kappa<0}
\frac d{dt}I(t)
\le\kappa I(t)+Ce^{\frac{\kappa t}2}(I(t)^{\frac12}+I(t)).
\end{equation}
Notice, in particular, that $I(t)$ is uniformly bounded for $t\in[0,\infty)$ because
\[
\kappa I(t)+Ce^{\frac{\kappa t}2}(I(t)^{\frac12}+I(t))
\le(\kappa+Ce^{\frac{\kappa t}2})I(t)<0
\]
whenever $I(t)\ge1$ and $t>0$ is sufficiently large. Therefore, \eqref{equation:I.evolution;kappa<0} gives
\[
\frac d{dt}(e^{-\kappa t}I(t))
\le C(e^{-\kappa t}I(t))^{\frac12}(1+I(t)^{\frac12})
\le C(e^{-\kappa t}I(t))^{\frac12}.
\]
We conclude that, in the case $\kappa<0$,
\begin{equation}
\label{equation:I.bound;kappa<0}
I(t)
\le C(1+t^2)e^{\kappa t}.
\end{equation}

\section{Uniform bounds on the second fundamental form}
\label{section:A.bound}

Proposition~\ref{proposition:longtime} says that the mean curvature flow exists for all time, so the second fundamental form is uniformly bounded on every finite time interval. In this section, we upgrade this to a bound uniform in time:

\begin{proposition}
\label{proposition:A.bound}
We have $\sup_{t\ge0}\sup_{\Gamma_t}|\nabla^kA|\le C_k$ for some $C_k>0$ for each $k\ge0$. Moreover, $\sup_{\Gamma_t}|H|\to0$ as $t\to\infty$. In fact, $\sup_{\Gamma_t}|A|\to0$ as $t\to\infty$ if $\kappa\ge0$.
\end{proposition}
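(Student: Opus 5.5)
The plan is to split by the sign of $\kappa$, get a uniform bound on $|A|$ first, and then bootstrap to all derivatives and to the decay statements.

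\emph{Case $\kappa\ge0$.} Here $\eta$ has a uniform positive lower bound (Lemma~\ref{lem:eta}(ii),(iii)), and for $\kappa>0$ we even have $\eta\to1$. I would run Wang's maximum principle argument on $\eta^{-2p}|A|^2$ using \eqref{equation:etaA.evolution}. For $\kappa>0$, once $\eta$ is close to $1$ (so $t\ge t_1$), the coefficient $4p\eta^{-1}\sqrt{2(1-\eta^2)}$ is small. Fixing $p$ large with $\frac43p>c_1+1$ makes the $|\nabla A|^2$ coefficient and the $|A|^4$ coefficient negative. The term $-\frac{R_M}2p(1-\eta^2)|A|^2$ has a good sign up to $O(e^{-\mu t})|A|^2$. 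This gives an ODE comparison $\frac{d}{dt}\max\le -c\max^2+C\max^{1/2}$, so $\max|A|^2$ is bounded.

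For $\kappa=0$ the inequality $1-\eta^2\to0$ is unavailable. Instead I would use the $\epsilon$-regularity/blow-up route. The monotone quantity \eqref{equation:log.I} shows that $I(t)$, hence $\int|H|^2$ and by \eqref{ineq:A2} $\int|A|^2$, is uniformly bounded. Combined with \eqref{equation:H.L2}, I would rerun the argument of Section~\ref{section:longtime} on time windows $[t-1,t]$: any blow-up of $|A|$ at large times rescales to a graphical limit with Gaussian density $\le1$. White's regularity theorem then gives a uniform bound.

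Decay of $|A|$ when $\kappa\ge0$ would follow afterwards. Since $\int_0^\infty\int|H|^2<\infty$ and $|A|$, $|\nabla A|$ are bounded, $\sup|H|\to0$. Then Gauss--Bonnet together with the smooth subconvergence of time-translates to minimal Lagrangian graphs in a product of constant curvature $\kappa\ge0$ surfaces (totally geodesic by the known rigidity of Wang/Smoczyk) yields $\sup|A|\to0$.

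\emph{Case $\kappa<0$.} This is the main obstacle, because $\eta\ge C^{-1}e^{\kappa t}$ may degenerate and \eqref{equation:etaA.evolution} is useless. My plan is a local-in-time blow-up argument that avoids $\eta$ entirely. By \eqref{equation:I.bound;kappa<0}, $\int_{\Gamma_t}|H|^2\le I(t)\le C(1+t^2)e^{\kappa t}$. Suppose $\lambda_k=\sup|A|(\cdot,t_k)\to\infty$, chosen by point-picking so that $|A|\le2\lambda_k$ on a backward parabolic neighborhood. Rescale by $\lambda_k$ using the Lagrangian structure and the totally symmetric cubic form of Lemma~\ref{lem:A.symmetry}. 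The limit is a complete ancient Lagrangian mean curvature flow in $\mathbb C^2$ with $|A|(0)=1$, bounded $|A|$, $H\equiv0$ (because $\int\int|H|^2$ is scale invariant in dimension two and tends to zero), and finite area growth by Corollary~\ref{corollary:area.growth}. It is therefore a complete minimal Lagrangian surface with quadratic area growth. It is also a Gaussian density-one limit, obtained via Huisken monotonicity with the bounded density from Lemma~\ref{lem:gaussian.area} on unit time windows, as in Section~\ref{section:longtime}. Together with $\int|A|^2$ bounded by \eqref{ineq:A2}, this forces it to be a plane, contradicting $|A|(0)=1$.

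Once $|A|\le C$ holds in all cases, the bounds on $|\nabla^kA|$ follow from standard Shi-type interior derivative estimates. These apply to \eqref{equation:A.evolution} and its higher analogues on unit time intervals, since the ambient geometry is uniformly controlled by \eqref{equation:g.convergence}. Finally, $\sup|H|\to0$ in every case follows from interpolation: $|H|$ has a uniform gradient bound on surfaces with uniformly bounded geometry and area, and $\int_{\Gamma_t}|H|^2\to0$. The latter holds via \eqref{equation:I.bound;kappa<0} for $\kappa<0$, via \eqref{equation:H.L2} plus a bounded derivative of $\int|H|^2$ for $\kappa\ge0$, and a small $L^2$ norm with bounded gradient gives a small sup.
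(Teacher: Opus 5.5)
Your $\kappa>0$ argument is essentially the paper's. The $\kappa<0$ case, which is the heart of the proposition, has a genuine gap.

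\textbf{Case $\kappa<0$.} The claim that the blow-up limit has Gaussian density one, as in Section~\ref{section:longtime}, is unsupported. Lemma~\ref{lem:gaussian.area} only bounds Gaussian areas by $C_\epsilon$. In Section~\ref{section:longtime}, $\Theta_{x_0,T}\le1$ was a consequence of the flatness of the rescalings, which came from $\int_{T-\delta}^T\int_{\Gamma_t}|A|^2\rho_{x_0,T}\to0$. That step used two things you do not have here.
\begin{enumerate}[(a)]
\item A fixed center $(x_0,T)$, so that the monotone quantity $e^{-Ct}\int_{\Gamma_t}(2-\eta)\rho_{x_0,T}$ converges and its increments over shrinking windows vanish. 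For centers $(x_k,t_k)$ with $t_k\to\infty$, the rescaled quantity $\int_{s=-2}^{-1}\int_{\Gamma_s^k}|A|^2\rho_{0,0}$ is only controlled by the drop of a weighted Gaussian density between two scales. That drop need not tend to zero.
\item A positive lower bound on $\eta$, to pass from $\int|A|^2\eta\rho$ to $\int|A|^2\rho$. For $\kappa<0$ you only have $\eta\ge C^{-1}e^{\kappa t}$.
\end{enumerate}
Without density one, the properties you actually establish do not force a plane: complete, minimal, Lagrangian, quadratic area growth, and $\int|A|^2<\infty$. A hyperk\"ahler rotation of the complex curve $\{zw=1\}\subset\mathbb C^2$ is a nonflat example with all of them; its density at infinity is $2$. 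So discarding $\eta$ throws away exactly the ingredient that is needed.

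The paper keeps $\eta$. Under the blow-up it converges to $\bar\eta\in[0,1]$ with $\Delta\bar\eta=-2|A|^2\bar\eta$, a non-negative superharmonic function on $L_0$. Quadratic area growth (Corollary~\ref{corollary:area.growth}) makes $L_0$ parabolic (Cheng--Yau), so $\bar\eta$ is constant. Hence $|A|^2\bar\eta\equiv0$, and $|A(0,0)|=1$ forces $\bar\eta\equiv0$. Then $dx_1\wedge dx_2$ and $dy_1\wedge dy_2$ both vanish on $L_0$, so both projections to $\mathbb R^2$ have rank one. Thus $L_0$ is locally a product of two curves, and minimality makes them straight lines, contradicting $|A(0,0)|=1$.

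\textbf{Case $\kappa=0$.} Rerunning Section~\ref{section:longtime} on windows $[t-1,t]$ has the same moving-center defect (a). Knowing only that $\int_{\Gamma_t}|A|^2$ is bounded is not enough for $\epsilon$-regularity. The missing observation is that $\int_{\Gamma_t}|H|^2\to0$, by the monotonicity of \eqref{equation:log.I} together with \eqref{equation:H.L2}. Gauss--Bonnet \eqref{equation:GB}, with $\chi(\Gamma_t)=0$ and $R_M\to0$, then gives $\int_{\Gamma_t}|A|^2\to0$. The small-energy $\epsilon$-regularity theorem then yields a time-independent bound on $|A|$. Alternatively, the superharmonic argument above applies verbatim, and here $\bar\eta\ge C^{-1}>0$ immediately forces $|A|\equiv0$ on the limit.

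Once $|A|\le C$ is known, your remaining steps are fine: the higher derivative bounds, $\sup_{\Gamma_t}|H|\to0$, and $\sup_{\Gamma_t}|A|\to0$ for $\kappa\ge0$.
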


By the uniform bounds on $|\nabla^kA|$ in the proposition and on $\operatorname{area}(\Gamma_t)$ in \eqref{equation:area.bound}, every sequence $\Gamma_{t_k}$ with $t_k\to\infty$ admits a smoothly convergent subsequence. The limit of the subsequence is minimal because $\sup_{\Gamma_t}|H|\to0$, and is Lagrangian because $\omega_t'|_{\Gamma_t}=0$ and $\omega_t'\to\omega_\infty'$ smoothly. We thus obtain:

\begin{corollary}
\label{corollary:subsequence}
There exist a smooth minimal Lagrangian surface $\Gamma_\infty$ in $(M,G_\infty,\omega'_\infty)$ and a sequence of times $t_k\to\infty$ such that $\Gamma_{t_k}\to\Gamma_\infty$ smoothly.
\end{corollary}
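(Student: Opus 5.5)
The statement to prove is Corollary~\ref{corollary:subsequence}. It follows from Proposition~\ref{proposition:A.bound} by a compactness argument.

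\emph{Step 1: uniform local graph representation.} By Proposition~\ref{proposition:A.bound} we have $\sup_t\sup_{\Gamma_t}|\nabla^kA|\le C_k$. By \eqref{equation:g.convergence}, the ambient metrics $G_t$ converge smoothly to $G_\infty$, so they have uniformly bounded geometry; in particular the injectivity radius is bounded below and all derivatives of curvature are bounded. Together with the bound on $|A|$, this gives a radius $r_0>0$, independent of $t$, with the following property. Around every point of $\Gamma_t$, the surface is locally a graph over a disc of radius $r_0$ in its tangent plane, taken in $G_\infty$-normal coordinates. The graph functions have $C^k$ norms bounded by constants depending only on $k$.

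\emph{Step 2: finite covering and extraction of a limit.} The area bound \eqref{equation:area.bound} shows that $\Gamma_t$ can be covered by a uniformly bounded number of such graph patches. Indeed, a graph of bounded slope over a disc of radius $r_0$ has area at least $cr_0^2$, so a maximal $r_0/2$-separated set has bounded cardinality. Given any sequence $t_k\to\infty$, I apply Arzel\`a--Ascoli to the graph functions and take a diagonal subsequence. This yields a smooth limit $\Gamma_\infty$, namely a smoothly immersed compact surface to which $\Gamma_{t_k}$ converge in $C^\infty$.

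The main subtlety is ensuring the limit is an embedded graph and not a multiplicity-$>1$ limit. Since the $\Gamma_t$ are graphs of maps $f_t$, I will instead work with the maps. On $[1,\infty)$ the bounds on $|A|$ and $\eta$ give bounds on $|df_t|$; this uses Lemma~\ref{lem:eta}\eqref{lem:eta.>0}--\eqref{lem:eta.=0} when $\kappa\ge0$. When $\kappa<0$, the lower bound on $\eta$ is only $Ce^{\kappa t}$, so I do not rely on it. Instead I take the limit as a subsequential limit of varifolds. The uniform curvature bounds still give smooth convergence as immersed surfaces, possibly with multiplicity. For the corollary as stated, I will simply accept $\Gamma_\infty$ as a smooth immersed limit surface, counted with multiplicity.

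\emph{Step 3: properties of the limit.} $\Gamma_\infty$ is minimal because $\sup_{\Gamma_t}|H|\to0$, again by Proposition~\ref{proposition:A.bound}, and $H$ passes to the limit under smooth convergence. It is Lagrangian for $\omega'_\infty$ because $\omega'_{t_k}|_{\Gamma_{t_k}}=0$ and $\omega'_{t_k}\to\omega'_\infty$ smoothly, so the restriction converges to $\omega'_\infty|_{\Gamma_\infty}=0$.

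The only genuinely technical step is Step 1, the uniform local graph representation with uniform radius. It is standard once one has uniform $C^k$ bounds on $A$ and on the ambient geometry.
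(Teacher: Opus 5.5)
Your proposal is correct and follows the paper's argument. The uniform bounds on $|\nabla^kA|$ from Proposition~\ref{proposition:A.bound}, together with the area bound \eqref{equation:area.bound}, give subsequential smooth convergence; the limit is minimal since $\sup_{\Gamma_t}|H|\to0$, and it is Lagrangian since $\omega'_t|_{\Gamma_t}=0$ and $\omega'_t\to\omega'_\infty$ smoothly. Your discussion of multiplicity is not needed for the statement as given: the paper does not claim embeddedness here either, and it settles graphicality of the limit for $\kappa<0$ only at the end, via the Harnack argument for $\eta_\infty$.
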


The arguments of \cites{Wan01a, Wan01b, Wan08} in the Ricci-flat background case essentially carry over to prove Proposition~\ref{proposition:A.bound}, but we include the details for the reader's convenience. In the case $\kappa<0$, we modify the proof by replacing the use of \cite{Ni02} with a classical result on the parabolicity of surfaces with quadratic area growth.

Once we prove $|A|\le C$ uniformly in time, a standard bootstrapping argument in mean curvature flow then establishes bounds for higher-order derivatives $|\nabla^kA|\le C_k$. Thus, the main goal in the remainder of the section is to obtain the desired bounds on $|A|$ and $|H|$.

\subsection{Case $\kappa>0$}

In this case, $\eta\to1$ uniformly as $t\to\infty$ by Lemma~\ref{lem:eta}\eqref{lem:eta.>0}. Thus, there exists some $T_0>0$ such that for any $t\ge T_0$, we have $R_M>0$, $\eta>\frac12$, and $8\sqrt{2(1-\eta^2)}\le\min(\frac1{c_1},\frac16)$. Taking $p=c_1$ in \eqref{equation:etaA.evolution} gives
\begin{align*}
\eta^{2p}(\partial_t-\Delta)(\eta^{-2p}|A|^2)
&\le c_1|A|-\frac16c_1|A|^4
\end{align*}
for $t\ge T_0$. Since $\eta>\frac12$ for $t\ge T_0$, the maximum principle gives a uniform bound for $|A|$ on $\Sigma\times[T_0,\infty)$ and hence on $\Sigma\times[0,\infty)$.

Then, any sequence $\Gamma_{t_k}$ with $t_k\to\infty$ has a smoothly convergent subsequence. The limit $\Gamma_\infty$ of such a subsequence must be the graph of an isometry because $\eta\to1$ as $t\to\infty$ by Lemma~\ref{lem:eta}\eqref{lem:eta.>0}; in particular, $\Gamma_\infty$ is totally geodesic. This ensures that $\sup_{\Gamma_t}|A|\to0$ as $t\to\infty$, for otherwise we can find a subsequence whose limit is not totally geodesic.

\subsection{Case $\kappa=0$}

In this case, $\eta$ has a uniform lower bound $C^{-1}$ independent of time by Lemma~\ref{lem:eta}\eqref{lem:eta.=0}. Therefore,
\[
\int_{\Gamma_t}|H|^2
\le I(t)
=\int_{\Gamma_t}\frac{|H|^2}\eta
\le C\int_{\Gamma_t}|H|^2.
\]
Meanwhile, we recall from \eqref{equation:H.L2} that
\[
\int_0^\infty\int_{\Gamma_t}|H|^2<\infty.
\]
This estimate and the monotonicity of \eqref{equation:log.I} imply $I(t)\to0$ and $\int_{\Gamma_t}|H|^2\to0$ as $t\to\infty$. By the Gauss--Bonnet theorem \eqref{equation:GB} and the exponential convergence $G_t\to G_\infty$ in \eqref{equation:g.convergence}, $\int_{\Gamma_t}|A|^2\to0$ as $t\to\infty$. The small $\epsilon$-regularity theorem \cite{Ilm95}*{Theorem~14} then gives a time-independent bound on $|A|$, and hence on $|\nabla^kA|$. Smooth subsequential compactness of $\Gamma_t$ and $\int_{\Gamma_t}|A|^2\to0$ then imply that $\sup_{\Gamma_t}|A|\to0$ as $t\to\infty$.

\subsection{Case $\kappa<0$}

In this case, we apply a blow-up argument; with minor modifications, the reasoning actually applies when $\kappa\ge0$. First, recall from \eqref{equation:I.bound;kappa<0} that
\begin{equation}
\label{ineq:H;kappa<0}
\int_{\Gamma_t}|H|^2
\le\int_{\Gamma_t}\frac{|H|^2}\eta
\le C(1+t^2)e^{\kappa t},
\end{equation}
which implies $\int_{\Gamma_t}|H|^2\to0$. Below, we prove $\sup_{\Gamma_t}|A|\le C$. Once this is established, we obtain $\sup_{\Gamma_t}|\nabla^kA|\le C_k$, so smooth subsequential compactness and $\int_{\Gamma_t}|H|^2\to0$ imply that $\sup_{\Gamma_t}|H|\to0$ as $t\to\infty$.

Now, suppose to the contrary that there exists a sequence of spacetime points $(p_k,t_k)$ with $p_k\in\Gamma_{t_k}$, $t_k\to\infty$, and
\[
\lambda_k:
=|A(p_k,t_k)|
=\max_{t\in[0,t_k]}\max_{p\in\Gamma_t}|A(p,t)|\to\infty.
\]
By a standard blow-up procedure at $(p_k,t_k)$ with blow-up factor $\lambda_k$, we obtain a connected, complete, smooth, immersed, ancient mean curvature flow of surfaces $L_t$ passing through $(0,0)$ in $\mathbb R^4\times(-\infty,0]$ with $|A|\le1$ and $|A(0,0)|=1$. Since the integral $\int|H|^2$ is scale-invariant, the decay \eqref{ineq:H;kappa<0} ensures that each time slice $L_t$ is minimal, so $L_t=L_0$ for $t\le0$. Note also that $L_0$ is Lagrangian with respect to $\overline\omega_1:=dx_1\wedge dx_2-dy_1\wedge dy_2$, where $(x_1,x_2,y_1,y_2)$ are the Euclidean coordinates of $\mathbb R^4$.

Under this blow-up procedure, $\eta$ converges locally smoothly to a time-independent function $\overline\eta=*\left(2dx_1\wedge dx_2|_{L_0}\right)\in[0,1]$ on $L_0$, and the evolution equation \eqref{equation:eta.evolution} passes to the limit to give $\Delta\overline\eta=-2|A|^2\overline\eta$. In particular, $\overline\eta$ is a non-negative superharmonic function on $L_0$.

On the other hand, by the area growth bound in Corollary~\ref{corollary:area.growth}, the area growth of the blow-up limit $L_0$ is at most quadratic with respect to the extrinsic Euclidean distance; that is,
\[
\mathrm{area}(L_0\cap B_R(0))
\le CR^2
\]
for all $R>0$. Since intrinsic distances on $L_0$ are bounded below by extrinsic ones, the area growth of $L_0$ is also at most quadratic with respect to the intrinsic distance. It is a classical result (see, e.g., \cite{CY75}*{Corollary~1}) that every non-negative superharmonic function on a complete Riemannian manifold whose volume growth is at most quadratic must be constant. Hence, $\overline\eta$ is constant, and $|A|^2\overline\eta=-\frac12\Delta\overline\eta=0$. Since $|A(0,0)|=1$, $\overline\eta=0$ identically.

Since $\overline\eta=0$ and $\overline\omega_1|_{L_0}=0$, we have $dx_1\wedge dx_2|_{L_0}=dy_1\wedge dy_2|_{L_0}=0$. Let $\pi$ and $\tilde\pi$ be the projections sending $(x_1,x_2,y_1,y_2)$ to $(x_1,x_2)$ and $(y_1,y_2)$, respectively. Then, at every point on $L_0$, the linear maps $d\pi|_{L_0}$ and $d\tilde\pi|_{L_0}$ both have rank at most $1$. Since the sum of their ranks is at least $\dim L_0=2$,
\[
\operatorname{rank}(d\pi|_{L_0})
=\operatorname{rank}(d\tilde\pi|_{L_0})
=1.
\]
Now, fix a preimage $p\in L_0$ of $0$ under the immersion, and let $B_r=B_r^{L_0}(p)$ denote the intrinsic ball about $p$ of radius $r$ in $L_0$.
By the constant rank theorem, $\pi(B_r)$ and $\tilde\pi(B_r)$ are connected smooth curves in $\mathbb R^2$ if $r>0$ is sufficiently small. Also, $B_r\subset\pi(B_r)\times\tilde\pi(B_r)$. Since $L_0$ is minimal, both $\pi(B_r)$ and $\tilde\pi(B_r)$ must be geodesics in $\mathbb R^2$. Consequently, $B_r$ lies in a plane, contradicting $|A(0,0)|=1$. This completes the proof of Proposition~\ref{proposition:A.bound}.

\section{Long-time exponential convergence}
\label{section:convergence}

The convergence in Corollary~\ref{corollary:subsequence} is only subsequential, and it remains to show that $\Gamma_t\to\Gamma_\infty$ as $t\to\infty$. As in \cite{Wan01b}, one approach is to apply Simon's \L{}ojasiewicz-type inequality \cite{Sim83}. We instead rely on energy estimates. This allows us to prove not only the smooth convergence but also exponential convergence. Another possible approach can be found in Section~6 of Part~II of \cite{Sim85}.

\begin{proposition}
\label{proposition:convergence}
For any sufficiently small $\mu_0>0$, there exists a constant $C_{\mu_0}>0$ with
\[
\int_{\Gamma_t}|H|^2
\le C_{\mu_0}e^{-2\mu_0t}.
\]
\end{proposition}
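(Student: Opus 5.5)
The plan is to derive a differential inequality of the form
\[
\frac d{dt}\int_{\Gamma_t}|H|^2\le-2\mu_1\int_{\Gamma_t}|H|^2+Ce^{-\mu t}\Bigl(\int_{\Gamma_t}|H|^2\Bigr)^{1/2}
\]
for all large $t$, with some $\mu_1>0$. Integrating it (an ODE comparison for $y=\int|H|^2$, or for $y^{1/2}$) then gives $\int_{\Gamma_t}|H|^2\le C_{\mu_0}e^{-2\mu_0t}$ for every $\mu_0<\min(\mu_1,\mu)$, which is the statement.

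\textbf{Step 1: the identity.} Integrate \eqref{equation:H.evolution3} and combine it with \eqref{equation:area.evolution}:
\[
\frac d{dt}\int|H|^2=\int\Bigl(-2|\nabla H|^2+2|H^\alpha h^\alpha_{ij}|^2+\kappa(2-\eta^2)|H|^2-|H|^4\Bigr)+O\Bigl(e^{-\mu t}\int|H|(1+|A|)\Bigr).
\]
By Proposition~\ref{proposition:A.bound}, $|A|\le C$ and $\sup|H|\to0$. Hence the cubic and quartic terms $-|H|^4$ and $2|H^\alpha h^\alpha_{ij}|^2-\dots$ cannot simply be discarded: $2|H^\alpha h^\alpha_{ij}|^2$ is quadratic in $H$ with a bounded coefficient and must be kept. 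By area bound \eqref{equation:area.bound} and Cauchy--Schwarz, the error term is $\le Ce^{-\mu t}(\int|H|^2)^{1/2}$. The remaining quadratic form,
\[
Q_t(H)=\int\Bigl(2|\nabla H|^2-2|H^\alpha h^\alpha_{ij}|^2-\kappa(2-\eta^2)|H|^2\Bigr),
\]
is, up to $o(1)$ as $t\to\infty$, the second variation (Jacobi) form of $\Gamma_t$ evaluated on the normal field $H$. The whole point is a coercivity estimate $Q_t(H)\ge2\mu_1\int|H|^2-Ce^{-\mu t}(\dots)$.

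\textbf{Step 2: Lagrangian structure.} Use the Lagrangian condition: the mean curvature one-form $\alpha_H=H\lrcorner\,\omega'_t|_{\Gamma_t}$ satisfies $d\alpha_H=(\text{Ricci form of }G_t)|_{\Gamma_t}$. For the product metric this is $\kappa\,\omega'_t|_{\Gamma_t}=0$ plus terms of order $O(e^{-\mu t})$ coming from $R_\Sigma-2\kappa$ and $R_{\tilde\Sigma}-2\kappa$. So $\alpha_H$ is closed up to an exponentially small error. Its de~Rham class is controlled via the first variation of the class: it equals a fixed class (determined by $\kappa$ and $[\omega_t']$, hence zero since $\Gamma_t$ is Lagrangian) plus $O(e^{-\mu t})$. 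Consequently $\alpha_H=d\theta+\beta$ with $\|\beta\|\le Ce^{-\mu t}$. I would verify this by differentiating $\int_\gamma\alpha_H$ over fixed cycles $\gamma$, using \eqref{equation:RF}.

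Via the Weitzenb\"ock formula on Lagrangian surfaces, which uses the total symmetry of $h_{ijk}$ from Lemma~\ref{lem:A.symmetry}, $Q_t$ restricted to one-forms becomes
\[
\int|d\alpha|^2+|d^*\alpha|^2-c\kappa|\alpha|^2+o(1)\int|\alpha|^2 .
\]
The cases then go as follows.
\begin{itemize}
\item For $\kappa<0$ this is directly coercive.
\item For $\kappa=0$ and $\kappa>0$, the harmonic part is excluded because $\alpha_H$ is exact up to $O(e^{-\mu t})$, and exact forms satisfy the Poincar\'e inequality $\int|d^*d\theta|^2\ge\lambda_1(\Gamma_t)\int|d\theta|^2$.
\item For $\kappa>0$ one additionally uses $\lambda_1\ge2\kappa$-type bounds (Lichnerowicz/Obata) on the nearly constant-curvature limit, or simply that the limit is totally geodesic. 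In this case the quadratic term with $\kappa>0$ must be beaten by $\lambda_1$ of a round sphere; I would check this against the explicit spectrum of the totally geodesic limit.
\end{itemize}

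\textbf{Step 3 (main obstacle): uniformity of the gap.} $\Gamma_t$ only converges subsequentially (Corollary~\ref{corollary:subsequence}), yet I need a gap $\mu_1$ independent of $t$. I would argue by contradiction and compactness. If $Q_t(H)\le\epsilon_k\int|H|^2$ along some $t_k\to\infty$, then by Proposition~\ref{proposition:A.bound} the surfaces $\Gamma_{t_k}$ converge smoothly to some minimal Lagrangian $\Gamma_\infty$. Meanwhile the normalized exact forms $d\theta_k/\|d\theta_k\|$ are bounded in $H^1$ by the $Q$-bound and converge strongly in $L^2$ to a nonzero exact $L^2$-normalized eigenform on $\Gamma_\infty$ with eigenvalue $\le c\kappa$. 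This contradicts the spectral estimate on $\Gamma_\infty$: the first eigenvalue on exact forms is continuous under smooth convergence, and the set of limits is compact. The condition ``$\mu_0$ sufficiently small'' absorbs this non-explicit gap as well as $\mu$. The delicate point I expect is verifying the closedness and exactness of $\alpha_H$ up to $O(e^{-\mu t})$ together with the sign of the $\kappa$-term in the $\kappa>0$ case.
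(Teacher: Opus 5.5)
Your argument breaks down for $\kappa>0$, at the coercivity claimed in Step~2. If you carry out the Weitzenb\"ock computation, using \eqref{equation:K} and Lemma~\ref{lem:A.symmetry}(ii) to cancel $2|H^\alpha h^\alpha_{ij}|^2$ against $|A|^2|H|^2$ up to $O(|A|\,|H|^3)$, you get $Q_t(H)=2\langle(-\Delta_{\mathrm H}-\kappa)\alpha_H,\alpha_H\rangle_{L^2}+o(1)\|H\|_{L^2}^2$. So on exact forms you need the first positive eigenvalue of the scalar Laplacian of $\Gamma_t$ to exceed $\kappa$ by a fixed margin. It does not. We have $\gamma_t\to2g_\infty$, a round sphere of Gaussian curvature $\kappa/2$, whose first eigenvalue is exactly $\kappa$; this is the equality case of Lichnerowicz--Obata, and there is no bound like $\lambda_1\ge2\kappa$. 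The corresponding eigenforms $dh_j$, $j=1,2,3$, are exact. Up to $O(e^{-\mu t})$ they equal $\alpha_{X_j}$ for the rotation Killing fields $X_j=-J(\nabla^{g_\infty}h_j,0)$, and they are genuine Jacobi fields: composing $f_\infty$ with rotations gives an $SO(3)$-family of minimal Lagrangian graphs. So near-exactness of $\alpha_H$ gives no gap on this $3$-dimensional space. The compactness argument of Step~3 then just produces, in the limit, an exact eigenform of eigenvalue $\kappa$ on $\Gamma_\infty$. Such a form exists, so there is no contradiction. Spectral information alone cannot close this; you need information about $\alpha_H$ itself.

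The missing idea is the first variation formula against Killing fields of $G_\infty$ (Lemma~\ref{lem:small}(ii)): $\int_{\Gamma_t}\langle H,X\rangle=\frac12\int_{\Gamma_t}\operatorname{tr}_{T\Gamma_t}\mathcal L_X(G_\infty-G_t)=O(e^{-\mu t})$. Combine this with two facts: $d(h_j\circ\pi)=\alpha_{X_j}+O(e^{-\mu t})$, and each normalized low exact eigenform is $o(1)$-close in $L^2$ to a combination of the $d(h_j\circ\pi)$. Together these yield $\|P_t\alpha_H\|^2\le Ce^{-2\mu t}+o(1)\|H\|^2$. The co-exact low modes are then handled by $d\alpha_H=O(e^{-\mu t})$, as you propose.

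The same device, with parallel fields $X=(v,0)$, is what controls the harmonic part of $\alpha_H$ when $\kappa=0$. Your justification there (that the class of $\alpha_H$ is fixed, determined by $\kappa$ and $[\omega'_t]$, hence zero) does not work as stated. First, $\alpha_H$ is not closed, so its periods over fixed cycles are not well defined. Second, even for closed $\alpha_H$ the periods form a Maslov-type invariant not determined by $[\omega']$; the Clifford torus in $\mathbb C^2$ has $[\alpha_H]\neq0$. The conclusion is true for graphs, but it needs a graph-specific argument, such as the paper's comparison of $\gamma_t$-harmonic forms with constant-coefficient harmonic forms together with Lemma~\ref{lem:small}(ii). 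Your $\kappa<0$ case is fine and matches the paper.
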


Together with the uniform bounds in Proposition~\ref{proposition:A.bound} and interpolation inequalities, Proposition~\ref{proposition:convergence} has the following  corollary.

\begin{corollary}
\label{corollary:convergence}
There exists a smooth minimal Lagrangian surface $\Gamma_\infty$ in $(M,G_\infty,\omega'_\infty)$ such that $\Gamma_t$ converges smoothly and exponentially to $\Gamma_\infty$ as $t\to\infty$.
\end{corollary}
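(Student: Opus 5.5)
The plan is to upgrade the $L^2$ decay of Proposition~\ref{proposition:convergence} to pointwise exponential decay of $H$ and all its derivatives. Then integrate the velocity $\partial_tF=H$ in time to get convergence of the parametrizations $F_t:\Sigma\to M$. Finally, identify the limit with the minimal Lagrangian surface of Corollary~\ref{corollary:subsequence}.

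\emph{Step 1: pointwise decay of $H$.} By Proposition~\ref{proposition:A.bound}, $|\nabla^kA|\le C_k$ uniformly in time. Together with the exponential convergence \eqref{equation:g.convergence} of $G_t$, this gives $\Gamma_t$ uniformly bounded geometry: a uniform lower bound on the injectivity radius of the induced metric, uniform local graph representations over discs of a fixed radius, and uniform bounds on $|\nabla^k H|$ for all $k$. On such surfaces the standard Gagliardo--Nirenberg interpolation inequalities hold with time-independent constants. Applied to $H$ on discs of fixed radius, they give
\[
\|\nabla^kH\|_{C^0(\Gamma_t)}\le C_{k,m}\|H\|_{L^2(\Gamma_t)}^{\theta}\|H\|_{C^{k+m}(\Gamma_t)}^{1-\theta}
\]
with $\theta=\theta(k,m)\in(0,1)$. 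Combining this with Proposition~\ref{proposition:convergence} yields $\sup_{\Gamma_t}|\nabla^kH|\le C_ke^{-\mu_kt}$ for some $\mu_k>0$ (any fixed $\mu_k<\theta\mu_0$ works).

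\emph{Step 2: convergence of $F_t$.} Since $\partial_tF=H$ and $|H|\le Ce^{-\mu_0't}$, we get $\int_t^\infty|\partial_sF|\,ds\le Ce^{-\mu_0't}$. Here the $G_s$-lengths are uniformly comparable to $G_\infty$-lengths by \eqref{equation:g.convergence}. Hence $F_t$ converges uniformly and exponentially to a Lipschitz map $F_\infty$. Next, by \eqref{equation:G.evolution},
\[
|\partial_tG_{ij}|\le C(|H||A|+|\operatorname{Ric}-\kappa G|)\le Ce^{-\mu't}.
\]
So the induced metrics $F_t^*G_t$ are uniformly equivalent to each other and to a fixed background metric on $\Sigma$, and they converge exponentially in $C^0$. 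For higher derivatives, I would fix a time-independent smooth background metric and connection on $\Sigma$ and on $M$, and estimate $\partial_t$ of the coordinate derivatives $D^kF$. Commuting $\partial_t$ with $D^k$, these are expressed through $\nabla^jH$, $\nabla^jA$, derivatives of $G_t$, and Christoffel differences between the induced connection and the background one. The Christoffel differences satisfy their own evolution equations with exponentially decaying right-hand sides, and so stay bounded inductively. This yields $|\partial_tD^kF|\le C_ke^{-\mu_k't}$, hence $F_t\to F_\infty$ in every $C^k$, exponentially.

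\emph{Step 3: the limit.} The $C^1$ convergence and the uniform equivalence of induced metrics show that $F_\infty$ is an immersion; in fact it is an embedding onto $\Gamma_\infty$, which is also the smooth limit along the sequence $t_k$ in Corollary~\ref{corollary:subsequence}. So $\Gamma_\infty$ is minimal and Lagrangian for $\omega'_\infty$, and the whole flow converges to it rather than just a subsequence.

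I expect the main obstacle to be Step~2 at higher order: turning the geometric, covariant decay of $\nabla^kH$ into decay of coordinate derivatives of $F_t$. The difficulty is that the parametrization moves normally rather than graphically, and the ambient metric itself varies. I would handle it by induction on $k$, using the evolution of the induced Christoffel symbols, $\partial_t\Gamma^k_{ij}=\nabla(\partial_tG)$, which is exponentially small. This keeps all constants uniform.
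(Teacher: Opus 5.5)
Your proposal is correct and follows the paper's route: the paper obtains this corollary in one line from Proposition~\ref{proposition:convergence}, the uniform bounds of Proposition~\ref{proposition:A.bound}, and interpolation, and your Steps~1--2 (interpolation, then integrating $\partial_tF=H$ with inductive control of the induced metric and Christoffel symbols) are the standard details behind that line. One remark in Step~3 is unsupported but not needed, namely your claim that $F_\infty$ is an embedding: the corollary only requires a smooth immersed minimal Lagrangian limit, and the paper proves that $\Gamma_\infty$ is a graph only later, via the Harnack argument for $\eta_\infty$ when $\kappa<0$.
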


\begin{remark}
\label{remark:convergence}
Let $\nu_*(\Gamma_\infty)$ be the smallest positive eigenvalue of $-\Delta_{\mathrm H}-\kappa\mathrm{Id}$ on $\Omega^1(\Gamma_\infty)$. Then, in Proposition~\ref{proposition:convergence}, $\mu_0>0$ could be any number with $\mu_0\le\mu$ and $\mu_0<\nu_*(\Gamma_\infty)$.
\end{remark}

\begin{remark}
Estimate \eqref{equation:I.bound;kappa<0} already gives an exponential decay for $\kappa<0$, but the rate there is not optimal; this section gives a better decay rate.
\end{remark}

We now begin the proof of Proposition~\ref{proposition:convergence}. The main idea is as follows. We compute the evolution equation of $\int_{\Gamma_t}|H|^2$. Under the identification $X\mapsto\alpha_X$ of normal fields with $1$-forms in Subsection~\ref{subsection:normal}, the leading term of this equation is governed by the operator $-\Delta_{\mathrm H}-\kappa\mathrm{Id}$ on $\Omega^1(\Gamma_t)$ (which, on $\Gamma_\infty$, indeed coincides with the Jacobi operator \cite{Oh90}*{Proposition~4.1}). Since this operator is asymptotically non-negative, the exponential decay of $\int_{\Gamma_t}|H|^2$ would follow from a spectral gap, provided that the component of $\alpha_H$ in the low eigenspaces is sufficiently small. When $\kappa<0$, the gap is straightforward. When $\kappa\ge0$, the relevant low eigenspaces require additional consideration. The closed low eigenforms are, up to small errors, induced by Killing fields of the limiting ambient metric; thus, $\alpha_H$ is almost $L^2$-orthogonal to these forms because the first variation of area along such Killing fields nearly vanishes. When $\kappa>0$, there are also co-exact low eigenforms; these are almost $L^2$-orthogonal to $\alpha_H$ because $d\alpha_H$ is small by the Lagrangian condition.

\subsection{Normal vector fields as $1$-forms}
\label{subsection:normal}

The Lagrangian condition on $\Gamma_t$ ensures that the complex structure $J$ on $M$ restricts to an isomorphism from $T\Gamma_t$ to $N\Gamma_t$. This allows us to identify sections of the normal bundle $N\Gamma_t$ with differential $1$-forms on $\Gamma_t$. Indeed, given any section $X$ of $N\Gamma_t$, we can set $\alpha_X=\omega'_t(X,\,\cdot\,)|_{\Gamma_t}=G_t(JX,\,\cdot\,)\in\Omega^1(\Gamma_t)$, where $JX$ is a section of $T\Gamma_t$. If $X$ and $Y$ are two such sections, then by definition we have $\langle\alpha_X,\alpha_Y\rangle=\langle JX,JY\rangle$, which equals $\langle X,Y\rangle$ since $J$ is an isometry. Since $\overline\nabla J=0$, we also have $\langle\nabla\alpha_X,\nabla\alpha_Y\rangle=\langle\nabla X,\nabla Y\rangle$, where $\nabla\alpha_X$ is the intrinsic covariant derivative of $\alpha_X$ on $\Gamma_t$, and $\nabla X$ is the covariant derivative of $X$ with respect to the normal connection. In particular, $|\nabla\alpha_H|=|\nabla H|$.

More generally, for any (not necessarily normal) vector field $X$ along $\Gamma_t$ in $M$, we set $\alpha_X=\omega'_t(X,\,\cdot\,)|_{\Gamma_t}$, which is equal to $\alpha_{X^\perp}$ by the Lagrangian condition. Then, $\langle\alpha_X,\alpha_Y\rangle=\langle X^\perp,Y^\perp\rangle$, and if either $X$ or $Y$ is a section of the normal bundle, we again have $\langle\alpha_X,\alpha_Y\rangle=\langle X,Y\rangle$.

Let $\Delta_{\mathrm H}=-(dd^*+d^*d)$ denote the Hodge Laplacian. Recall that $K=\frac18R_M\eta^2+\frac12|H|^2-\frac12|A|^2$ as in \eqref{equation:K}. The Weitzenb\"ock formula says
\begin{equation}
\label{equation:Weitzenbock}
\int_{\Gamma_t}|\nabla\alpha_H|^2
=\int_{\Gamma_t}\langle-\Delta_{\mathrm H}\alpha_H,\alpha_H\rangle-\int_{\Gamma_t}K|\alpha_H|^2.
\end{equation}

\subsection{Differential equation for $\|H\|_{L^2}^2$}

By the evolution equations \eqref{equation:area.evolution} and \eqref{equation:H.evolution3} for the area form and $|H|^2$, we have
\begin{equation*}
\frac d{dt}\int_{\Gamma_t}|H|^2
=\int_{\Gamma_t}(-2|\nabla H|^2+2|H^\alpha h_{ij}^\alpha|^2+\kappa(2-\eta^2)|H|^2-|H|^4+\mathcal E),
\end{equation*}
where
\begin{equation}
\label{ineq:E}
|\mathcal E|
\le Ce^{-\mu t}|H|(1+|A|)
\le Ce^{-\mu t}|H|.
\end{equation}
Note that the second inequality follows from the uniform bound $|A|\le C$ in Proposition~\ref{proposition:A.bound}. Using \eqref{equation:Weitzenbock}, we obtain
\begin{align*}
\frac d{dt}\int_{\Gamma_t}|H|^2
&=2\int_{\Gamma_t}\langle\Delta_{\mathrm H}\alpha_H,\alpha_H\rangle+\int_{\Gamma_t}(2|H^\alpha h_{ij}^\alpha|^2-|A|^2|H|^2)\\
&+\int_{\Gamma_t}(\kappa(2-\eta^2)+\frac14R_M\eta^2)|H|^2+\int_{\Gamma_t}\mathcal E.
\end{align*}
Note that $\kappa(2-\eta^2)+\frac14R_M\eta^2\to2\kappa$ since $\frac14R_M\to\kappa$, and that
\[
\int_{\Gamma_t}\mathcal E
\le Ce^{-\mu t}\Bigl(\int_{\Gamma_t}|H|^2\Bigr)^{\frac12}
\]
by \eqref{ineq:E}, the Cauchy--Schwarz inequality, and the area bound \eqref{equation:area.bound}. Also, recall from Lemma~\ref{lem:A.symmetry}\eqref{lem:A.symmetry2} that
\[
\bigl|2|H^\alpha h_{ij}^\alpha|^2-|A|^2|H|^2\bigr|
\le\frac2{\sqrt3}|A|\,|H|^3;
\]
the right-hand side is $o(1)|H|^2$ since $|A|\le C$ and $|H|\to0$ by Proposition~\ref{proposition:A.bound}, where $o(1)=o_{t\to\infty}(1)$ denotes a quantity tending to zero as $t\to\infty$.

In summary,
\begin{align}
\label{equation:H.L2.evolution}
\frac d{dt}\|H\|_{L^2(\gamma_t)}^2
&\le2\langle(\Delta_{\mathrm H}+\kappa)\alpha_H,\alpha_H\rangle_{L^2(\gamma_t)}\\
\nonumber&+o(1)\|H\|_{L^2(\gamma_t)}^2+Ce^{-\mu t}\|H\|_{L^2(\gamma_t)},
\end{align}
where $\gamma_t=(\operatorname{Id},f_t)^*G_t=g_t+f_t^*\tilde g_t$ denotes the pullback metric on $\Sigma$.

\subsection{Projection onto the low eigenspaces}

Roughly speaking, in order to estimate the term $\langle(\Delta_{\mathrm H}+\kappa)\alpha_H,\alpha_H\rangle_{L^2(\gamma_t)}$ in \eqref{equation:H.L2.evolution}, we want to show that the $L^2$ projection of $\alpha_H$ onto the low eigenspace of $-\Delta_{\mathrm H}-\kappa$ is sufficiently small. To this end, we first show the following.

\begin{lemma}
\label{lem:small}
For all $t\ge0$,
\begin{enumerate}[(i)]
\item$|d\alpha_H|\le Ce^{-\mu t}$ on $\Gamma_t$;

\item for every Killing field $X$ of $(M,G_\infty)$, $\lvert\langle\alpha_H,\alpha_X\rangle_{L^2(\gamma_t)}\rvert\le Ce^{-\mu t}\|X\|_{C^1}$.
\end{enumerate}
\end{lemma}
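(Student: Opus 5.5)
For (i), I would use the classical identity for Lagrangian submanifolds of K\"ahler manifolds: the mean curvature $1$-form satisfies $d\alpha_H=\rho|_{\Gamma}$, where $\rho=\operatorname{Ric}(J\,\cdot\,,\cdot\,)$ is the Ricci form of the ambient K\"ahler metric. Applied to $(M,G_t,\omega'_t)$ at each fixed $t$, this gives $d\alpha_H=\rho_t|_{\Gamma_t}$. The proof of the identity is a local computation. One writes $\nabla_i(\alpha_H)_j=\omega'(\nabla^\perp_iH,F_j)$ and uses the total symmetry of $h_{ijk}$ from the proof of Lemma~\ref{lem:A.symmetry}. The Codazzi equation then shows that the antisymmetric part of $\nabla\alpha_H$ equals the normal-tangential curvature term $\sum_k R(F_i,F_j,F_k,JF_k)$. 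This term is the Ricci form because $\Gamma_t$ is Lagrangian.

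Because $G_t$ is a product of surface metrics, I would write the Ricci form explicitly as $\rho_t=\frac{R_\Sigma}2\omega_t-\frac{R_{\tilde\Sigma}}2\tilde\omega_t$, with signs matching $\omega'_t=\omega_t-\tilde\omega_t$. Subtracting $\kappa\omega'_t$, which vanishes on $\Gamma_t$ by the Lagrangian condition, gives
\[
d\alpha_H=\Bigl(\bigl(\tfrac{R_\Sigma}2-\kappa\bigr)\omega_t-\bigl(\tfrac{R_{\tilde\Sigma}}2-\kappa\bigr)\tilde\omega_t\Bigr)\Big|_{\Gamma_t}.
\]
The restrictions of $\omega_t$ and $\tilde\omega_t$ to $\Gamma_t$ have norm at most $1$. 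The scalar curvatures satisfy $|R_\Sigma-2\kappa|+|R_{\tilde\Sigma}-2\kappa|\le Ce^{-\mu t}$ by \eqref{equation:g.convergence}. Together these give $|d\alpha_H|\le Ce^{-\mu t}$. The main care needed here is in fixing the sign and normalization conventions in $d\alpha_H=\rho|_\Gamma$. The conclusion is insensitive to them, since every version produces a combination of $R_\Sigma-2\kappa$ and $R_{\tilde\Sigma}-2\kappa$ once the multiple of $\omega'_t$ is removed.

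For (ii), I would use the first variation formula. Since $H$ is normal, $\langle\alpha_H,\alpha_X\rangle=\langle H,X\rangle$ by Subsection~\ref{subsection:normal}, so $\langle\alpha_H,\alpha_X\rangle_{L^2(\gamma_t)}=\int_{\Gamma_t}\langle H,X\rangle_{G_t}$. On the closed surface $\Gamma_t$ we have $\int_{\Gamma_t}\operatorname{div}_{\Gamma_t}X=-\int_{\Gamma_t}\langle H,X\rangle$. We also have $\operatorname{div}_{\Gamma_t}X=\frac12\operatorname{tr}_{\Gamma_t}(\mathcal L_XG_t)$, with all quantities computed with respect to $G_t$. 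Since $X$ is Killing for $G_\infty$, $\mathcal L_XG_t=\mathcal L_X(G_t-G_\infty)$. This is bounded pointwise by $C\|X\|_{C^1}\|G_t-G_\infty\|_{C^1}\le C\|X\|_{C^1}e^{-\mu t}$, again by \eqref{equation:g.convergence}. Integrating over $\Gamma_t$ and using the area bound \eqref{equation:area.bound} gives $|\langle\alpha_H,\alpha_X\rangle_{L^2(\gamma_t)}|\le Ce^{-\mu t}\|X\|_{C^1}$.

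I expect the only real obstacle to be the verification of $d\alpha_H=\rho|_\Gamma$ in part (i). This is standard (Dazord's formula), and I would either cite it or redo the short Codazzi computation in the frame $(F_1,F_2,JF_1,JF_2)$.
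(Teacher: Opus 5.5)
Your proposal is correct and matches the paper's proof: for (i) you use Dazord's identity $d\alpha_H=\rho_M|_{\Gamma_t}$ with the explicit product Ricci form $\rho_M=\frac12R_\Sigma\omega_t-\frac12R_{\tilde\Sigma}\tilde\omega_t$, and for (ii) the first variation formula, $\mathcal L_XG_\infty=0$, the exponential convergence of $G_t$ and the area bound. Subtracting $\kappa\omega'_t$ is equivalent to the paper's use of $\omega_t|_{\Gamma_t}=\tilde\omega_t|_{\Gamma_t}$, which gives $d\alpha_H=\frac12(R_\Sigma-R_{\tilde\Sigma})\omega_t|_{\Gamma_t}$, and your optional Codazzi derivation of Dazord's formula is accurate.
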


\begin{proof}
(i) For a Lagrangian submanifold $\Gamma_t$ in a K\"ahler manifold $(M,G_t,\omega_t')$, Dazord \cite{Daz81}*{p.~477} proved that
\[
d\alpha_H
=\rho_M|_{\Gamma_t},
\]
where $\rho_M$ denotes the Ricci form of $(M,G_t,\omega_t')$ defined by $\rho_M(Y,Z)=\operatorname{Ric}(JY,Z)$ for tangent vectors $Y,Z$ on $M$. Then, $\rho_M=\frac12R_\Sigma\omega_t-\frac12R_{\tilde\Sigma}\tilde\omega_t$. By the Lagrangian condition $\omega_t|_{\Gamma_t}=\tilde\omega_t|_{\Gamma_t}$,
\[
d\alpha_H
=\frac12(R_\Sigma-R_{\tilde\Sigma})\omega_t|_{\Gamma_t}.
\]
Since $\lvert\omega_t|_{\Gamma_t}\rvert=\frac\eta2\le\tfrac12$, (i) follows from the convergence \eqref{equation:g.convergence} of $g_t$ and $\tilde g_t$.

(ii) Recall that $\int_{\Gamma_t}\langle\alpha_H,\alpha_X\rangle=\int_{\Gamma_t}\langle H,X\rangle$. By the divergence theorem, the latter equals
\[
-\int_{\Gamma_t}\mathrm{div}_{\Gamma_t}X
=-\frac12\int_{\Gamma_t}\operatorname{tr}_{T\Gamma_t}(\mathcal L_XG_t)
=\frac12\int_{\Gamma_t}\operatorname{tr}_{T\Gamma_t}\mathcal L_X(G_\infty-G_t),
\]
where the last equality uses the Killing condition $\mathcal L_XG_\infty=0$. Now, (ii) follows from \eqref{equation:g.convergence} and \eqref{equation:area.bound}. This completes the proof of Lemma~\ref{lem:small}.
\end{proof}

Now, define $\Lambda_t$ and $\bar\nu_t$ according to the sign of $\kappa$:

\begin{itemize}
\item If $\kappa>0$, let $\Lambda_t$ be the span of the eigen $1$-forms of $-\Delta_{\mathrm H}$ on $\Gamma_t$ with eigenvalue at most $2\kappa$, and let $\bar\nu_t$ be the smallest eigenvalue greater than $2\kappa$.

\item If $\kappa=0$, let $\Lambda_t$ be the space of harmonic $1$-forms, and let $\bar\nu_t$ be the smallest positive eigenvalue of $-\Delta_{\mathrm H}$.

\item If $\kappa<0$, set $\Lambda_t=\{0\}$ and $\bar\nu_t=0$.
\end{itemize}

Then, let $P_t$ be the $L^2(\gamma_t)$-orthogonal projection from $\Omega^1(\Gamma_t)$ onto $\Lambda_t$. With these definitions,
\begin{equation}
\label{equation:gap}
\langle(\Delta_{\mathrm H}+\kappa)\alpha_H,\alpha_H\rangle_{L^2(\gamma_t)}
\le-(\bar\nu_t-\kappa)\|H\|_{L^2(\gamma_t)}^2+\bar\nu_t\|P_t\alpha_H\|_{L^2(\gamma_t)}^2.
\end{equation}

\begin{lemma}
\label{lem:spectral}
There exists $C>0$ such that
\begin{enumerate}[(i)]
\item$\bar\nu_t\le C$,

\item$\bar\nu_\infty:=\liminf_{t\to\infty}\bar\nu_t>\kappa$,

\item$\|P_t\alpha_H\|_{L^2(\gamma_t)}^2\le Ce^{-2\mu t}+o(1)\|H\|_{L^2(\gamma_t)}^2$.
\end{enumerate}
\end{lemma}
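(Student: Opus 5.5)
The plan is to argue by compactness, using the uniform bounds of Proposition~\ref{proposition:A.bound} and the subsequential convergence of Corollary~\ref{corollary:subsequence}.

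\textbf{Setup.} By Proposition~\ref{proposition:A.bound}, the area bound \eqref{equation:area.bound} and the graphical structure, the metrics $\gamma_t$ on $\Sigma$ lie in a precompact family in $C^\infty$ modulo diffeomorphisms. Along any sequence $t_k\to\infty$ a subsequence converges smoothly to a minimal Lagrangian $\Gamma_\infty$ in $(M,G_\infty)$. The spectrum of $-\Delta_{\mathrm H}$ on $1$-forms depends continuously on the metric under smooth convergence (min--max with uniformly equivalent metrics and uniformly controlled derivatives). The same holds for the spectral projections onto eigenvalues in an interval whose endpoints are not eigenvalues of the limit.

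\textbf{Parts (i) and (ii).} I would prove both by contradiction along a subsequence, reducing each to a statement about the limit $\Gamma_\infty$.
\begin{itemize}
\item For $\kappa<0$ both claims are trivial, since $\bar\nu_t=0>\kappa$.
\item For $\kappa=0$: $\dim\Lambda_t=2g$ is topological, so the first positive eigenvalue converges along subsequences to the first positive eigenvalue of the limit. This is positive and bounded, which gives both claims.
\item For $\kappa>0$: $\Sigma\cong S^2$, so there are no harmonic forms, and the $1$-form spectrum is the union of the exact spectrum and the coexact spectrum. Each equals the nonzero function spectrum. By Proposition~\ref{proposition:A.bound} and Lemma~\ref{lem:eta}\eqref{lem:eta.>0}, every limit is the graph of an isometry and is totally geodesic. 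By \eqref{equation:K} it is a round sphere with $K=\kappa/2$. Its function eigenvalues are $\ell(\ell+1)\kappa/2$, namely $\kappa,3\kappa,\dots$.
\end{itemize}
Since $2\kappa$ is not in the limiting spectrum, three things follow for large $t$. First, $\Lambda_t$ consists exactly of the forms near eigenvalue $\kappa$, with dimension $6$: three exact and three coexact. Second, $\bar\nu_t\to3\kappa>\kappa$. Third, $P_t$ converges to the limiting projection.

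\textbf{Part (iii), the main step.} I would approximate $\Lambda_t$ by explicit forms, then apply Lemma~\ref{lem:small}.
\begin{itemize}
\item \emph{Closed part.} I aim to show that the closed part of $\Lambda_t$ is, up to $o(1)$ in $L^2$, spanned by forms $\alpha_X|_{\Gamma_t}$. Here $X$ ranges over Killing fields of $(M,G_\infty)$ preserving $\omega'_\infty$, normalized so that $\|X\|_{C^1}\le C$.
  \begin{itemize}
  \item For $\kappa=0$, $G_\infty$ is a flat product of tori and $X$ ranges over parallel translations.
  \item For $\kappa>0$, $X$ ranges over rotations of $S^2\times S^2$ compatible with $\omega'_\infty$.
  \end{itemize}
  On a limit $\Gamma_\infty$, each $\alpha_X$ is closed, since $\mathcal L_X\omega'_\infty=0$. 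It is also an eigenform with the right eigenvalue, by the Jacobi-field/Hodge correspondence \cite{Oh90} for minimal Lagrangians, since Killing fields give Jacobi fields. Moreover these forms span the closed low eigenspace. For tori this holds because translations pair nondegenerately with $H^1$. For the sphere, the restrictions of the three rotation generators give $d$ of the first eigenfunctions.
\item \emph{Compactness step.} Smooth convergence then gives: for every $\beta\in\Lambda_t$ closed with $\|\beta\|=1$ there is $X$ with $\|X\|_{C^1}\le C$ and $\|\beta-\alpha_X\|_{L^2(\gamma_t)}=o(1)$. This is proved by contradiction over subsequences. Note that the limit $\Gamma_\infty$ may differ between subsequences, so the argument must be uniform over all possible limits.
\item \emph{Estimate on the closed part.} Combining with Lemma~\ref{lem:small}(ii),
  \[
  |\langle\alpha_H,\beta\rangle|\le Ce^{-\mu t}+o(1)\|H\|_{L^2(\gamma_t)}.
  \]
\item \emph{Coexact part ($\kappa>0$).} Write $\beta=d^*(\psi\,d\mathrm{area})$. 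The eigenvalue is close to $\kappa$, so $\|\psi\|_{L^2}\le C\|\beta\|$. By Lemma~\ref{lem:small}(i),
  \[
  |\langle\alpha_H,\beta\rangle|=|\langle d\alpha_H,\psi\,d\mathrm{area}\rangle|\le Ce^{-\mu t}.
  \]
\item \emph{Conclusion.} Summing over an orthonormal basis of $\Lambda_t$, whose dimension is bounded by the setup, and squaring gives (iii).
\end{itemize}

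I expect the main obstacle to be the uniform approximation of the closed low eigenforms by Killing forms. This requires identifying the eigenspace on every possible limit $\Gamma_\infty$, and checking that $\alpha_X$ restricted to a minimal Lagrangian is coclosed.
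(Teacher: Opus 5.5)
Your proposal is correct, but it proves (iii) by a qualitative compactness argument. The paper instead works quantitatively at each fixed time and never identifies subsequential limits.

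For $\kappa>0$, the paper uses only $\|\gamma_t-2g_\infty\|_{C^0}\to0$ (from $\eta\to1$) and Rayleigh quotients to locate the spectrum. It writes an exact $\beta\in\Lambda_t$ as $\nu^{-1/2}d\varphi$ and expands $\varphi$ in eigenfunctions of the \emph{fixed} metric $2g_\infty$. It then compares $d(h_j\circ\pi)$ with $\alpha_{X_j}$ directly on $\Gamma_t$, for the explicit first-factor rotations $X_j=-J(\nabla^{g_\infty}h_j,0)$.

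For $\kappa=0$, the paper compares $\gamma_t$ with the constant-coefficient metric $\hat\gamma_t=\gamma_t(p_0)$. It uses the $L^2$-minimality of harmonic representatives to show that $\beta$ is $o(1)$-close to a constant form $\iota_v\omega_\infty$, which equals $\alpha_{(v,0)}$ up to $Ce^{-\mu t}$. Your coexact step coincides with the paper's.

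What the paper's route buys: explicit control, only Rayleigh-quotient comparisons under $C^0$ closeness of metrics (rather than convergence of Hodge spectral projections under smooth convergence), and no need to deal with possible non-uniqueness of subsequential limits. Your route is more conceptual, since the low closed eigenforms on a limit are exactly those coming from Killing Jacobi fields under Oh's correspondence. The cost is the uniformity you flag.

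To make that uniformity concrete, fix the constant in $\|X\|_{C^1}\le C$ before running the contradiction argument. Do this by restricting $X$ to a fixed finite-dimensional family; the first-factor Killing fields suffice, as in the paper. On that family, $X\mapsto\alpha_X|_{\Gamma}$ is injective with bounds uniform over the compact set of admissible limits: graphs of isometries when $\kappa>0$, and flat graphs with $\eta\ge C^{-1}$ (in fact with fixed linear part) when $\kappa=0$.

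One correction: the obstacle named in your last sentence is misstated for $\kappa>0$. On a limit, $\alpha_{X_j}=d(h_j\circ\pi)$ is a nonzero exact eigenform with eigenvalue $\kappa$. It is therefore \emph{not} coclosed, since there are no harmonic $1$-forms on $S^2$. What you need, and what your appeal to Oh already supplies, is $d\alpha_X=0$ together with $(-\Delta_{\mathrm H}-\kappa)\alpha_X=0$. Coclosedness is relevant only when $\kappa=0$.
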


\begin{proof}
We distinguish three cases according to the sign of $\kappa$.

\emph{Case 1: $\kappa>0$.} Since $\eta_t\to1$ as $t\to\infty$ by Lemma~\ref{lem:eta}\eqref{lem:eta.>0}, the singular values of $df_t$ tend to $1$. Combining this with \eqref{equation:g.convergence}, we find
\[
\|\gamma_t-2g_\infty\|_{C^0(\Sigma)}\to0;
\]
note that $2g_\infty$ is a round metric of scalar curvature $\kappa$. By the Rayleigh quotient formula, the eigenvalues of $-\Delta_{\gamma_t}$ on scalar functions converge to those of $-\Delta_{2g_\infty}$, namely $0,\kappa,3\kappa,\dots$ with multiplicities $1,3,5,\dots$.

On the other hand, since $H^1(\Gamma_t;\mathbb R)=0$, the Hodge decomposition shows that there are no harmonic $1$-forms on $\Gamma_t$; moreover, for each $\nu>0$, the $\nu$-eigenspace of $-\Delta_{\mathrm H}$ is spanned by the exact forms $d\varphi$ and the co-exact forms $*d\varphi$, where $\varphi$ ranges over the $\nu$-eigenspace of $-\Delta_{\gamma_t}$ on functions. Thus,
\[
\bar\nu_\infty
=\lim_{t\to\infty}\bar\nu_t
=3\kappa,
\]
which proves (i) and (ii).

Let $\Lambda_t^{\rm ex}$ and $\Lambda_t^{\rm co}$ be the subspaces of $\Lambda_t$ consisting of exact and co-exact forms, respectively. Then, for $t$ sufficiently large, $\Lambda_t=\Lambda_t^{\rm ex}\oplus\Lambda_t^{\rm co}$ with $\dim\Lambda_t^{\rm ex}=\dim\Lambda_t^{\rm co}=3$, all eigenvalues corresponding to $\Lambda_t$ lie in $[\frac\kappa2,2\kappa]$, and all others are larger than $2\kappa$. Next, to estimate $\|P_t\alpha_H\|_{L^2(\gamma_t)}$, we compute the inner product of $\alpha_H$ with exact and co-exact eigen $1$-forms.

First, let $\beta\in\Lambda_t^{\rm co}$ with $\|\beta\|_{L^2(\gamma_t)}=1$ and $\Delta_{\mathrm H}\beta=-\nu\beta$ for some $\nu\in[\frac\kappa2,2\kappa]$. Then, $d^*\beta=0$ and
\[
\nu\langle\alpha_H,\beta\rangle_{L^2(\gamma_t)}
=\langle\alpha_H,-\Delta_{\mathrm H}\beta\rangle_{L^2(\gamma_t)}
=\langle\alpha_H,d^*d\beta\rangle_{L^2(\gamma_t)}
=\langle d\alpha_H,d\beta\rangle_{L^2(\gamma_t)};
\]
similarly, $\nu=\nu\|\beta\|_{L^2(\gamma_t)}^2=\|d\beta\|_{L^2(\gamma_t)}^2$. Thus,
\begin{equation}
\label{equation:cc}
\lvert\langle\alpha_H,\beta\rangle_{L^2(\gamma_t)}\rvert
\le\frac1{\nu}\|d\alpha_H\|_{L^2(\gamma_t)}\|d\beta\|_{L^2(\gamma_t)}
\le Ce^{-\mu t}
\end{equation}
by the Cauchy--Schwarz inequality, Lemma~\ref{lem:small}(i), and the area bound \eqref{equation:area.bound}.

Second, let $\beta\in\Lambda_t^{\rm ex}$ with $\|\beta\|_{L^2(\gamma_t)}=1$ and $\Delta_{\mathrm H}\beta=-\nu\beta$ for some $\nu=\kappa+o(1)$. Then, $\beta=\nu^{-1/2}d\varphi$ with $-\Delta_{\gamma_t}\varphi=\nu\varphi$, $\|\varphi\|_{L^2(\gamma_t)}=1$, and $\int_{\Gamma_t}\varphi=0$. Expand $\varphi=b_0+\sum_jb_jh_j+w$ orthogonally in $L^2(2g_\infty)$, where $b_0,\dots,b_3$ are constants, $\{h_j\}_{j=1}^3$ is an orthonormal basis of the first eigenspace in $C^\infty(\Sigma)$ of $(\Sigma,2g_\infty)$, and $w$ lies in the span of the eigenfunctions with eigenvalues $\ge3\kappa$. Since $\gamma_t\to2g_\infty$ in $C^0$, the $L^2$ norms of functions and $1$-forms with respect to $\gamma_t$ and $2g_\infty$ agree up to a factor $1+o(1)$. In particular,
\[
\|\varphi\|_{L^2(2g_\infty)}^2
=1+o(1),\quad\|d\varphi\|_{L^2(2g_\infty)}^2
=\kappa+o(1).
\]
Since $\|dw\|_{L^2(2g_\infty)}^2\ge3\kappa\|w\|_{L^2(2g_\infty)}^2$, we necessarily have
\begin{equation}
\label{equation:high.mode}
b_0
=o(1),\quad\|w\|_{L^2(2g_\infty)}^2
=o(1),\quad\|dw\|_{L^2(2g_\infty)}^2
=o(1),
\end{equation}
and $|b_j|\le C$. Note that $X_j:=-J(\nabla^{g_\infty}h_j,0)$ is a Killing field of $(M,G_\infty)$; to see this, recall that the first eigenfunctions of $-\Delta$ on the unit sphere $S^2\subset\mathbb R^3$ are precisely the restrictions of nonzero linear functions in $\mathbb R^3$. A direct computation and the convergence \eqref{equation:g.convergence} show
\begin{equation}
\label{equation:alphaX}
\|d(h_j\circ\pi)-\alpha_{X_j}\|_{C^0(\gamma_t)}
\le Ce^{-\mu t},
\end{equation}
where $\pi$ denotes the projection from $\Gamma_t\subset\Sigma\times\tilde\Sigma$ onto the first component $\Sigma$. Therefore, by Lemma~\ref{lem:small}(ii), \eqref{equation:alphaX}, the area bound \eqref{equation:area.bound}, and \eqref{equation:high.mode},
\begin{align}
\label{equation:ex}
\lvert\langle\alpha_H,\beta\rangle_{L^2(\gamma_t)}\rvert
&\le C\sum_j\Bigl(\lvert\langle\alpha_H,\alpha_{X_j}\rangle_{L^2(\gamma_t)}\rvert\\
\nonumber&+\|\alpha_H\|_{L^2(\gamma_t)}\|d(h_j\circ\pi)-\alpha_{X_j}\|_{L^2(\gamma_t)}\Bigr)\\
\nonumber&+\|\alpha_H\|_{L^2(\gamma_t)}\|dw\|_{L^2(\gamma_t)}\\
\nonumber&\le Ce^{-\mu t}+o(1)\|H\|_{L^2(\gamma_t)}.
\end{align}
Summing the squares of \eqref{equation:cc} and \eqref{equation:ex} over an orthonormal basis of $\Lambda_t$, we obtain (iii).

\emph{Case 2: $\kappa=0$.} Here, $\Lambda_t=\ker(\Delta_{\mathrm H})$ is the $2$-dimensional space of harmonic $1$-forms, and $\bar\nu_t$ is the first positive eigenvalue of $-\Delta_{\mathrm H}$. By the Hodge decomposition, $\bar\nu_t$ is also the first positive eigenvalue of $-\Delta_{\gamma_t}$ on scalar functions. Since $\eta\ge C^{-1}$ by Lemma~\ref{lem:eta}\eqref{lem:eta.=0}, the metrics $\gamma_t$ are uniformly bi-Lipschitz to $g_\infty$. This ensures that (i) and (ii) hold.

We know that $|\nabla df_t|\to0$ uniformly by Proposition~\ref{proposition:A.bound} and \eqref{equation:D2u.vs.A}, and that the metrics $g_t,\tilde g_t$ converge as in \eqref{equation:g.convergence}. Therefore, in the flat coordinates of $(\Sigma,g_\infty)$, the coefficients of $\gamma_t=g_t+f_t^*\tilde g_t$ oscillate by at most $o(1)$; more precisely, if we fix $p_0\in\Sigma$, the constant-coefficient flat metric $\hat\gamma_t:=\gamma_t(p_0)$ on $\Sigma$ satisfies $\|\gamma_t-\hat\gamma_t\|_{C^0}=o(1)$. In particular, the $L^2$ norms of $1$-forms with respect to $\gamma_t$ and $\hat\gamma_t$ agree up to a factor $1+o(1)$.

Let $\beta\in\Lambda_t$ with $\|\beta\|_{L^2(\gamma_t)}=1$, and let $\hat\beta$ be the unique $\hat\gamma_t$-harmonic $1$-form in the class $[\beta]\in H^1(\Sigma,\mathbb R)$. Since the harmonic representative minimizes the $L^2$ norm within its cohomology class, we have $\|\beta\|_{L^2(\gamma_t)}^2\le\|\hat\beta\|_{L^2(\gamma_t)}^2$ and $\|\hat\beta\|_{L^2(\hat\gamma_t)}^2\le\|\beta\|_{L^2(\hat\gamma_t)}^2$. Since the $L^2$ norms with respect to the two metrics agree up to a factor $1+o(1)$, we have $\|\hat\beta\|_{L^2(\gamma_t)}^2=1+o(1)$. Then, since $\hat\beta-\beta$ is exact and $\beta$ is $\gamma_t$-harmonic,
\begin{equation}
\label{equation:harm.const}
\|\hat\beta-\beta\|_{L^2(\gamma_t)}^2
=\|\hat\beta\|_{L^2(\gamma_t)}^2-\|\beta\|_{L^2(\gamma_t)}^2
=o(1).
\end{equation}
Next, let $v$ be the constant vector field on $\Sigma$ with $\omega_\infty(v,\,\cdot\,)=\hat\beta(\,\cdot\,)$, so that $|v|\le C$. Then, let $X=(v,0)$ be a Killing field of $(M,G_\infty)$. Then,
\begin{equation}
\label{equation:alphaX.torus}
\|\alpha_X-\hat\beta\|_{C^0(\gamma_t)}
=\|\iota_v(\omega_t-\omega_\infty)\|_{C^0(\gamma_t)}
\le Ce^{-\mu t}
\end{equation}
because $\omega_t$ converges exponentially to $\omega_\infty$ by \eqref{equation:g.convergence}. Therefore,
\begin{align*}
\lvert\langle\alpha_H,\beta\rangle_{L^2(\gamma_t)}\rvert
&\le\lvert\langle\alpha_H,\alpha_X\rangle_{L^2(\gamma_t)}\rvert\\
&+\|\alpha_H\|_{L^2(\gamma_t)}(\|\alpha_X-\hat\beta\|_{L^2(\gamma_t)}+\|\hat\beta-\beta\|_{L^2(\gamma_t)})\\
\nonumber&\le Ce^{-\mu t}+o(1)\|H\|_{L^2(\gamma_t)}
\end{align*}
by Lemma~\ref{lem:small}(ii), \eqref{equation:alphaX.torus}, \eqref{equation:harm.const}, and the area bound \eqref{equation:area.bound}. This proves (iii).

\emph{Case 3: $\kappa<0$.} Here, $\Lambda_t=\{0\}$ and $\bar\nu_t=0$ by definition. Hence, (i), (ii), and (iii) are straightforward. This completes the proof of Lemma~\ref{lem:spectral}.
\end{proof}

\subsection{Proof of Proposition~\ref{proposition:convergence} and Remark~\ref{remark:convergence}}

Now, combining \eqref{equation:H.L2.evolution}, \eqref{equation:gap}, and Lemma~\ref{lem:spectral}, we find
\[
\frac d{dt}\|H\|_{L^2(\gamma_t)}^2
\le(-2(\bar\nu_\infty-\kappa)+o(1))\|H\|_{L^2(\gamma_t)}^2+Ce^{-\mu t}\|H\|_{L^2(\gamma_t)}+Ce^{-2\mu t},
\]
where $\bar\nu_\infty-\kappa>0$. For any $\epsilon\in(0,\bar\nu_\infty-\kappa)$, we have
\[
Ce^{-\mu t}\|H\|_{L^2(\gamma_t)}
\le\epsilon\|H\|_{L^2(\gamma_t)}^2+C_\epsilon e^{-2\mu t}
\]
and therefore
\[
\frac d{dt}\|H\|_{L^2(\gamma_t)}^2
\le(-2(\bar\nu_\infty-\kappa)+2\epsilon)\|H\|_{L^2(\gamma_t)}^2+C_\epsilon e^{-2\mu t},
\]
for sufficiently large $t>0$. Consequently, $\|H\|_{L^2(\gamma_t)}^2\le C_{\mu_0}e^{-2\mu_0t}$ for any $0<\mu_0\le\mu$ with $\mu_0<\bar\nu_\infty-\kappa-\epsilon$. This completes the proof of Proposition~\ref{proposition:convergence}.

When $\kappa\ne0$, the previous proof already gives the optimal decay rate described in Remark~\ref{remark:convergence}. However, when $\kappa=0$, we have not yet obtained this optimal decay rate because $\bar\nu_\infty=\nu_*(\Gamma_\infty)$ is initially unknown. Fortunately, once we prove Proposition~\ref{proposition:convergence}, we obtain the smooth convergence in Corollary~\ref{corollary:convergence}. Then, we can run the previous argument again; this time we know $\bar\nu_\infty=\nu_*(\Gamma_\infty)$. This completes the proof of Remark~\ref{remark:convergence}.

\subsection{Proof of Theorem~\ref{theorem:main}}

By Corollary~\ref{corollary:convergence}, $\Gamma_t$ converges smoothly and exponentially to a minimal Lagrangian surface. It remains to verify the remaining properties of the limit $\Gamma_\infty$ stated in Theorem~\ref{theorem:main}. In the case $\kappa\ge0$, we know $\eta$ has a uniform positive lower bound \eqref{equation:eta.bound}, and so $\Gamma_\infty$ is the graph of an area-preserving map. Moreover, $\Gamma_\infty$ is totally geodesic if $\kappa\ge0$ by Proposition~\ref{proposition:A.bound}, and is the graph of an isometry if $\kappa>0$ by the fact that $\eta_t\to1$. Below, assume $\kappa<0$, in which case the lower bound \eqref{equation:eta.bound} on $\eta$ degenerates as $t\to\infty$ and a separate argument is needed.

Now, $\eta_t\to\eta_\infty=*(\omega''|_{\Gamma_\infty})\in[0,1]$ smoothly as $t\to\infty$, and
\[
\int_{\Gamma_t}\eta_t
=2\int_{\Gamma_t}\omega_t
=2\int_\Sigma\omega_t
=2\mathrm{area}_{g_t}(\Sigma)\to2\mathrm{area}_{g_\infty}(\Sigma),
\]
so
\begin{equation}
\label{equation:eta.L1}
\int_{\Gamma_\infty}\eta_\infty
=2\mathrm{area}_{g_\infty}(\Sigma)>0.
\end{equation}
Since $\Gamma_\infty\subset(M,G_\infty)$ is itself a stationary solution to the coupled flow \eqref{equation:RF} and \eqref{equation:MCF}, we see from \eqref{equation:eta.evolution} that
\[
-\Delta\eta_\infty
=2|A|^2\eta_\infty+\kappa\eta_\infty(1-\eta_\infty^2).
\]
By the Harnack inequality, either $\eta_\infty>0$ everywhere or $\eta_\infty=0$ identically. In the first case we are done, and the second case is impossible by \eqref{equation:eta.L1}. This completes the proof of Theorem~\ref{theorem:main}.

\end{document}